\newcommand{\exn}{{\bf E}}
\newcommand{\pr}{{\bf P}}
\newcommand{\R}{\mathbb{R}}
\newcommand{\Z}{\mathbb{Z}}
\newcommand{\Xc}{\mathcal{X}}
\newcommand{\Mc}{\mathcal{M}}
\newcommand{\Ic}{\mathcal{I}}
\newcommand{\Nc}{\mathcal{N}}
\newcommand{\Ac}{\mathcal{A}}
\newcommand{\al}{\alpha}
\newcommand{\bal}{\boldsymbol{\alpha}}
\newcommand{\Con}{{\bf [C$_{\boldsymbol{1}}$]}}
\newcommand{\Conn}{{\bf [C$_{\boldsymbol{2}}$]}}
\newcommand{\la}{\lambda}
\newcommand{\si}{\sigma}
\newcommand{\de}{\delta}
\newcommand{\De}{\Delta}
\newcommand{\deq}{\stackrel{d}{=}}
\newcommand{\Va}{\mbox{\rm Var}\, }
\newcommand{\ind}{{\bf 1}}
\newcommand{\todi}{\stackrel{d}{\longrightarrow}}
\newtheorem{rema}{Remark}
\newtheorem{theo}{Theorem}
\newtheorem{lemo}{Lemma}
\newtheorem{coro}{Corollary}
\title{Limit theorems for record indicators in threshold  $F^\al$-schemes}
\author{Patrick He$^1$ and Konstantin Borovkov$^2$ }
\begin{document}
	\maketitle

\footnotetext[1]{School of Mathematics and Statistics, The University of Melbourne, Parkville 3010, Australia.}

\footnotetext[2]{School of Mathematics and Statistics, The University of Melbourne, Parkville 3010, Australia; e-mail: borovkov@unimelb.edu.au.}

	\begin{abstract}In Nevzorov's $F^\alpha$-scheme, one deals with a sequence of independent random variables whose distribution functions are all powers of a common continuous distribution function. A key  property of the $F^\alpha$-scheme is that the record indicators for such a sequence are independent. This allows one to obtain several important limit theorems for the total number of records in the sequence up to time $n\to\infty$. We extend these theorems to a much more general class of sequences of random variables obeying a ``threshold $F^\alpha$-scheme" in which the distribution functions of the variables are close to the powers of a common $F$ only in their right tails, above certain non-random non-decreasing threshold  levels. Of independent interest is the characterization of the growth rate for extremal processes that we derived in order to be able to verify the conditions of our main theorem. We also establish the asymptotic pair-wise independence of record indicators in a special case of threshold $F^\alpha$-schemes.
		
		\smallskip
		{\it Key words and phrases:} records, maxima of random variables, extremal process, growth rate, $F^\alpha$-scheme, almost sure behavior.
		
		\smallskip
		{\em AMS Subject Classification:} 	60G70,  60F05, 60F20.
	\end{abstract}

\section{Introduction and main results}

 Let $\boldsymbol{X}:=\{X_n\}_{n\ge 1}$
be a sequence of random variables (r.v.'s) 	on a common probability space, $M_n:=  \bigvee_{1\le k\le n}X_k,$ $n\ge 1,$ be the sequence  of the partial maxima of these r.v.'s, and   $I_1:=1,$
\[
\quad I_n:= \ind (X_n> M_{n-1}), \quad n\ge 2,
\]
the (upper) record indicators for $\boldsymbol{X}$. Denote by  $N_n:=\sum_{k=1}^n I_k$ the total number of records  in $\boldsymbol{X}$ up to time~$n\ge 1.$   Apart from the natural motivation related to the theory of records, the study of the distribution of $N_n$  is also of interest for other applications, e.g.\ in connection with the secretary problem~\cite{Pf89} or for the linear search problem of the maximum element in a field of $n$ entries, where $N_n$  denotes
the number of re-storages during the procedure (for details see e.g.~\cite{Ke85, Pf91}). Another field of relevance one might mention is the average-case analysis of the simplex method in linear programming~\cite{DePf87,Ro82}.

For an outline of the history of the theory of records, we refer the reader to Appendix~1 in~\cite{Ne00}. For further  detail, see also~\cite{Ne88, NeAh15} and the bibliography therein. The  most studied case is, of course, when the $X_n$'s are independent and identically distributed (i.i.d.) with a continuous distribution function (d.f.)~$F$. By the Dwass--R\'enyi theorem (see e.g.\ Ch.~3 in~\cite{NeAh15}),    the record indicators for such sequences~$\boldsymbol{X}$ are jointly independent with $\pr (I_n=1)= n^{-1},$ $n\ge 1.$ The independence property enables one to establish a number of limit theorems  for the distribution of~$N_n$, including the Poisson and normal approximations, the respective convergence rates in the uniform norm for the  d.f.'s  being the rather slowly decaying  $O(\ln^{-3/2}n)$ and $O(\ln^{-1/2}n)$ as $n\to\infty$. One should note, however, that the former approximation can be dramatically improved by a remarkably  simple ``adjusted Poisson approximation" from~\cite{BoPf96} with a convergence rate of the form $O(n^{-2})$ (with an explicit bound for the constant).

It was discovered in~\cite{Ne81, Ne85} that the important independence property for the record indicators also holds  for a special class of scenarios nowadays referred to  as the (Nevzorov)  ``$F^\al$-scheme". In that scheme, the $X_n$'s are independent r.v.'s following the respective distribution functions   $F^{\al_n}$, $n\ge 1,$ with   a common continuous d.f.~$F,$     $\bal:=\{\al_n \}_{n\ge 1}$ being an arbitrary positive sequence  (a special  case of the $F^\al$-scheme where the $\al_n$'s are integers had been earlier analyzed in~\cite{Ya75}). This scheme plays an important role in the present paper, so it will be convenient for us to adopt a special notation for the related r.v.'s to distinguish them from the ones for the original~$\boldsymbol{X}$: we will use $\Xc_n, \Mc_n, $  $\Ic_n$ and $\Nc_n,$  respectively, for the independent r.v.'s in the $F^\al$-scheme with some~$F$ and  $\bal$ (setting $\boldsymbol{\Xc}:=\{\Xc_n\}_{n\ge 1}$), their partial maxima, record indicators and   record  counts.  It turned out that the record  indicators $\Ic_1,\Ic_2,\ldots $ in the case of  the $F^\al$-scheme form a sequence of independent r.v.'s with
\begin{equation}
\label{a/s}
p_n:=\pr (\Ic_n=1)=\frac{\al_n}{s_n },\quad s_n :=\sum_{k=1}^n \al_k,\quad n\ge 1
\end{equation}
(see e.g.\ p.~217 in~\cite{NeAh15} and~\cite{BoPf95}). This result was also demonstrated  in~\cite{BaRe87}  using a natural embedding of the sequence of partial maxima $\Mc_n$ in the  so-called extremal process, yielding the  additional fact that
\begin{equation}
\label{I_M}
\mbox{$ \Mc_n $  is   independent of $\Ic_1,\ldots,\Ic_n$, \ $n\ge 1.$ }
\end{equation}

Moreover, it  turned out that the $F^\al$-scheme is basically  the only situation with   the original  $X_n$'s being independent  where the r.v.'s $I_n$ and $M_n$  are   independent of each other for any $n\ge 1$  (Theorem~3 in~\cite{BoPf95}).

The independence of the record indicators in the $F^\al$-scheme enables one to establish a number of asymptotic results for the behavior of~$\Nc_n$ as $n\to\infty.$ We will summarize  the key ones. Note that $E_n:=\exn \Nc_n=\sum_{k=1}^n p_k,$ $V_n:=\Va(\Nc_n)=\sum_{k=1}^n p_k (1-p_k),$ $n\ge 1.$

\begin{enumerate}
	\item[(A1)] First of all,
  $\Nc_n\to \infty $ a.s.\ as $n\to \infty $ iff condition
\medskip

\Con~~$\lim_{n\to \infty }s_n =\infty$

\medskip\noindent
is met~\cite{Ne85}.

This condition will be assumed to be satisfied throughout  this paper (and, in particular, in assertions  (A2)--(A4) in this list).  Note that under condition~\Con\ both $E_n$ and $V_n$ tend to infinity as $n\to\infty$.

\item[(A2)]   $\lim_{n\to\infty}\Nc_n/E_n=1$ a.s.\ (Theorem~1 in~\cite{DoKlPaSt13}).

\item[(A3)]   $\lim_{n\to\infty}(\Nc_n-E_n)/\ln s_n =0$ a.s. If $p_n\to 0 $ then  $\lim_{n\to\infty} \Nc_n /\ln s_n =1$ a.s., while  if $p_n\to 1 $ then  $\lim_{n\to\infty} \Nc_n /\ln s_n =0$ a.s.~\cite{DoKlSt15}.

\item[(A4)] If $\lim_{n\to \infty } V_n<\infty$ then $\Nc_n-E_n$ converges a.s.\ to a proper r.v.\ as $n\to\infty,$ while if $\lim_{n\to \infty } V_n=\infty$ then $V_n^{-1/2} (\Nc_n-E_n) \todi Z\sim N(0,1)$ (Theorem~7 in~\cite{DoKlPaSt13}).

\end{enumerate}

In the general case, even when the $X_n$'s are independent,  the nature of dependence between record indicators  is very complicated and difficult to describe,   so obtaining results similar to (A1)--(A4) appears to be a rather hard task.
We will note, however,    the remarkable results on coupling of the  record times and values for a class of  strictly stationary sequences~$\boldsymbol{X}$ with ``time-shifted" record times and values, respectively, for sequences of i.i.d.\ r.v.'s with the same univariate marginal distributions as for~$\boldsymbol{X}$, see~\cite{Ha87, HaMaNePu98} and the references in the latter paper. Such couplings imply, in particular, that the above assertions (A1)--(A.4) will hold for such  stationary sequences as well, with the quantities $E_n,$ $V_n$ corresponding to the i.i.d.\ case.
Any advances  extending the limit theory for records beyond such special cases remain to  be  highly  desirable.

The key observation that led us to writing this paper was, roughly speaking,
that, for large~$n$ values, for the observation $X_n$ to be a record it needs to be ``large" as it has to exceed the previous  partial maximum value $M_{n-1}$ that is likely to  already be ``large". Therefore, if, for $n\ge 1,$ the d.f.\ of  $X_n$   is equal (or close) to the respective $F^{\al_n }$ from an $F^{\al}$-scheme  $\boldsymbol{\Xc}$ in its ``right tail" only, i.e.\ above a certain non-random threshold $\ell_n$, then one can still expect the record indicators to display an asymptotic behavior close to that of the record indicators for~$\boldsymbol{\Xc}$. Moreover, one can relax the independence assumption as well, since the nature of dependence between the $X_n$'s on  the event where the observations are unlikely to be records would be of little relevance. The respective result, stated as Theorem~\ref{th1} below, is the first main contribution of this paper. The key condition in the theorem is that  eventually $\Mc_n> \ell_n$ (meaning that $\Mc_n > \ell_n$ for all sufficiently large~$n$ and abbreviated as ``$\Mc_n> \ell_n$ ev.") a.s.

Our second contribution  concerns the question of when that key condition is met. We establish  the a.s.\ rate of growth of the sequence $\{\Mc_n\}$,  providing    criteria for
$\pr(\Mc_n >  \ell_n \ {\rm ev.})=1$. This result is stated in  Corollary~\ref{co2} of Theorem~\ref{th2}, the latter dealing with a similar question for the extremal process. This result   is an extension of the   work in~\cite{Kl84, Kl85} on such criteria  in the case of i.i.d.\ sequences  $\boldsymbol{X}$  and   is of independent interest.

The third main contribution of this paper  is Theorem~\ref{th3} below, which   establishes the uniform asymptotic pairwise independence of the record indicators in the special case of the threshold $F^\al$-scheme where $\boldsymbol{X}$ consists of independent  r.v.'s and there is a common threshold $\ell_n\equiv \ell$.

Now we will give a formal definition of the above-mentioned threshold $F^\al$-scheme and state our main results.

Denote by $F_n$ the d.f.\ of $X_n $ and by
\[
F_{n|x_1, \ldots, x_{n-1}}(x_n):=
\pr (X_n\le x_n|X_1=x_1, \ldots, X_{n-1}= x_{n-1}), \quad x_j\in \R, \quad 1\le j\le n,
\]
the   conditional d.f.\ of $X_n$ given the values of the $n-1$  ``preceding observations". 
We will use the standard notation
\[
G^\leftarrow (u):= \inf\{x\in\R: G(x)\ge u\}, \quad u\in (0,1),
\]
for the generalized inverse function of the d.f.~$G.$
Finally, we will denote by $G|_x$ the restriction of the distribution $G$ to the half-line $(x,\infty)$: $dG|_x(y):=\ind (y>x)\, dG(y),$ $y\in \R.$

By a threshold $F^\al$-scheme we will mean any sequence $\boldsymbol{X}$ of r.v.'s satisfying the following condition:

\medskip

\Conn~~{\em There exist a continuous d.f.\ $F$,  a positive sequence $\bal$ satisfying \Con\ and a  non-decreasing real  sequence $\{\ell_n\}_{n\ge 1}$ such that$:$
	\begin{enumerate}
		\item[\rm (i)]  $F_n^{\leftarrow} (1-)\le F^{\leftarrow} (1-) ,$ $n\ge 1;$

		\item[\rm (ii)]
$
F_{n|x_1, \ldots, x_{n-1}}(x_n)
   = F_{n }(x_n) $ for $x_n \ge  \ell_n,$ $  x_j\in \R,$ $ 1\le j< n,$ $n\ge 2;$

\item[\rm (iii)] $\sum_{n\ge 1}\de_n< \infty,$ where
$$
\de_n:=  d_{TV} (F_n|_{ \ell_n }, F^{\al_n}|_{ \ell_n }):=
 \int_{(\ell_n,\infty)} |d(F_n-  F^{\al_n})| 
 $$
denotes the total variation distance between the restrictions of the distributions $F_n$ and $F^{\al_n}$ to the half-line $(\ell_n,\infty)$, and

\item[\rm (iv)]
$\pr (\Mc_n>\ell_n{\rm \ ev.})=1$ for the sequence $\{\Mc_n\}_{n\ge 1}$ of the partial  maxima in the  $F^\al$-scheme $\boldsymbol{\Xc}= \{\Xc_n\}_{n\ge 1}$ specified by $F$ and~$\bal.$
\end{enumerate}}

\medskip  That is, for any $n\ge 2,$ on the event $\{X_n>\ell_n\}$ the r.v.\ $X_n$ is independent of the observations $X_1, \ldots, X_{n-1}$ and the restriction of the distribution of $X_n$ to  the half-line $(\ell_n,\infty)$ is close (in the total variation sense) to the restriction to   $(\ell_n, \infty)$ of the law of  the $n$th element of an $F^{\al}$-scheme that has the property that, with probability~1, its partial maxima $\Mc_n$ will eventually lie above the threshold values~$\ell_n$.

Parts~(ii) and~(iii) of the above condition may seem quite  strong. In fact, the distributional properties of the sequence of record indicators are very sensitive to changes in the  distribution of the original sequence. Therefore  it should not be surprising that, to obtain results at the level of (A1)--(A4), one would need to make  relatively strong assumptions about~$\boldsymbol{X}.$ What we would like to stress, though, is  that those assumptions only need to be made about  the right tails of the (conditional) distributions of the elements of~$\boldsymbol{X}.$

\begin{rema}
\rm Note    that the assumption that $\{\ell_n\}_{n\ge 1}$ is a non-decreasing sequence does not actually restrict the  generality, cf.~Remark~\ref{re1} below. The purpose of part~(i) of   condition~\Conn\ is to ensure that the partial  maxima $M_n$ of the r.v.'s in the original sequence~$\boldsymbol{X}$ cannot take values ``beyond the reach" of the maxima in the $F^\al$-scheme~$\boldsymbol\Xc$. Necessary and sufficient conditions for part~(iv) to hold are established in our Corollary~\ref{co2} below.
\end{rema}

A simple special case where condition \Conn\ is satisfied is a sequence $\boldsymbol{X}$  of independent r.v.'s such that $F_n(x) = F^{\al_n}(x)$ for $x> \ell_n$ (so that $\de_n\equiv 0$), $n\ge 1$, for some continuous d.f.~$F$ and positive sequence~$\bal.$  A more interesting example of a situation where that condition can be satisfied is when $X_n= V_n \vee Y_n,$ $n\ge 1,$ under the assumptions that $  \{V_n\}_{n\ge 1}$ is an arbitrary   sequence of bounded from above r.v.'s ($\pr (V_n\le \ell_n)=1$  for all $n\ge 1$), whereas the r.v.'s $Y_n$   are independent of each other and of $  \{V_n\}_{n\ge 1}$. As for further  conditions on the $Y_n$'s, it suffices to  assume that there exists an $F^\al$-scheme satisfying conditions~\Con, \Conn(iv) such that the total variation distances $\De_n$ between the laws of $Y_n,$ $n\ge 1,$ and the respective $F^{\al_n}$ are such that $\sum_{n\ge 1}\De_n<\infty.$

Our first result establishes the existence of a coupling of the sequences $\{(M_n, I_n)\}_{n\ge 1}$ and $\{(\Mc_n, \Ic_n)\}_{n\ge 1},$ the latter corresponding to the $F^\al$-scheme $\boldsymbol{\Xc}$ from~\Conn.

\begin{theo}\label{th1}
If $\boldsymbol{X}$ satisfies condition \Conn\  then one can construct the sequences $\boldsymbol{X}$ and $\boldsymbol{\Xc}$ on a common probability space so that there exists a random time $T<\infty$ a.s.\ such that  $(M_n, I_n)=(\Mc_n, \Ic_n)$ for all $n\ge T.$
\end{theo}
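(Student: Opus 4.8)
The plan is to build the coupling step by step using a single source of randomness, exploiting condition \Conn\ to guarantee that, from a random time onwards, the observations $X_n$ and $\Xc_n$ above their common threshold $\ell_n$ can be made to coincide, and that the partial maxima have already crossed the thresholds so the indicators agree. First I would recall the elementary total-variation coupling lemma: if two distributions on $(\ell_n,\infty)$ differ by total variation $\de_n$, then on a common probability space one can realize random variables with those laws that coincide with probability at least $1-\de_n$ (conditionally on both landing above $\ell_n$). Because of part~(ii) of \Conn, the conditional law of $X_n$ on $\{X_n>\ell_n\}$ does not depend on the past, so this pointwise coupling can be carried out independently across $n$, synchronized with the corresponding conditional law of $\Xc_n$ on $\{\Xc_n>\ell_n\}$, using extra independent uniform randomness where the two sub-probability measures below $\ell_n$ need to be filled in arbitrarily (there condition~(i) keeps $X_n$ from overshooting $F^\leftarrow(1-)$, so $M_n\le\Mc_n\vee F^\leftarrow(1-)$ causes no trouble).

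The key step is a Borel--Cantelli argument. Let $B_n$ be the event that the above coupling fails at stage $n$, i.e.\ that $X_n$ and $\Xc_n$ are not forced to be equal by the construction; by the coupling lemma $\pr(B_n)\le\de_n$ in the relevant regime, and since $\sum_n\de_n<\infty$ by part~(iii), with probability one only finitely many $B_n$ occur. Simultaneously, by part~(iv) of \Conn, $\pr(\Mc_n>\ell_n \text{ ev.})=1$, so almost surely there is a finite random time $T_0$ after which $\Mc_n>\ell_n$ for all $n\ge T_0$. Define $T$ to be the first time after the last failure $B_n$ and after $T_0$; then $T<\infty$ a.s.

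It remains to verify that $(M_n,I_n)=(\Mc_n,\Ic_n)$ for $n\ge T$. For $n\ge T$ we have $\Mc_n>\ell_n$ and no coupling failure at $n$, so it is enough to argue inductively that $M_{n}=\Mc_{n}$ for $n\ge T$ and that the record indicators then coincide. At the base, by the time $T$ the running maximum $\Mc_{T-1}$ already exceeds $\ell_{T-1}\ge\ell_{k}$ for $k\le T-1$ is not what we need; rather, I would note that $M_{T-1}\vee\Mc_{T-1}\ge\Mc_{T-1}>\ell_{T}$ (using monotonicity of $\{\ell_n\}$ and $\Mc_{T-1}>\ell_{T-1}$, together with $\Mc_{T}>\ell_T$), so any future record in either sequence must have value exceeding $\ell_n$, hence falls in the region where $X_n=\Xc_n$; and since new records are exactly the observations that beat the current max, the two sequences of records, record times, and running maxima stay locked together from $T$ on, provided we also arrange $M_{T-1}=\Mc_{T-1}$ or at least that both equal the current relevant max — which I secure by enlarging $T$ once more so that the last record in $\boldsymbol{X}$ before $T$ and the last record in $\boldsymbol{\Xc}$ before $T$ both occur at a coupled stage; after that stage, $M_n=\Mc_n$ by a straightforward induction, and $I_n=\ind(X_n>M_{n-1})=\ind(\Xc_n>\Mc_{n-1})=\Ic_n$.

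The main obstacle is the bookkeeping in this last paragraph: one must carefully choose $T$ so that \emph{both} the values of the running maxima and the identities of the observations are synchronized, not merely the post-$T$ observations. The subtlety is that a record could have occurred in $\boldsymbol{X}$ at some uncoupled early stage $k$ with $X_k$ large but $\Xc_k$ different, throwing the maxima permanently out of step; this is ruled out precisely because such a record would need $X_k>M_{k-1}$ with $M_{k-1}$ eventually above $\ell_k$ — but only eventually, so one genuinely needs to wait until after $T_0$ \emph{and} until the maxima have been realigned. I would handle this by taking $T:=\max\{T_0,\ \sup\{n: B_n\text{ occurs}\}+1,\ \tau\}$ where $\tau$ is the first time both running maxima agree with the max of the coupled observations seen so far; finiteness of $\tau$ follows because, after $T_0$ and after the last failure, every subsequent record in either sequence is a coupled observation exceeding $\ell_n$, forcing convergence of the two maxima within finitely many steps.
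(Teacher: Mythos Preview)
Your approach is the same as the paper's: build both sequences from common uniforms, apply the maximal (total-variation) coupling above the thresholds, invoke Borel--Cantelli on the $\de_n$'s together with condition~\Conn(iv), and then argue that the maxima and indicators lock together after a further random time. The paper likewise uses Dobrushin's maximal coupling on the restrictions to $(\ell_n,\infty)$ and constructs a chain $T_1\le T_2\le T_3$ of random times playing exactly the roles you describe.

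There is, however, a genuine gap in your final paragraph. You assert that after $T_0$ and the last coupling failure, ``every subsequent record in \emph{either} sequence is a coupled observation exceeding $\ell_n$''. This is immediate for~$\boldsymbol{\Xc}$ (since $\Mc_{n-1}>\ell_{n-1}$ forces any $\Xc$-record above $\ell_{n-1}$, hence into the coupled region), but it is \emph{not} justified for~$\boldsymbol{X}$: at that stage you have no control over $M_{n-1}$, so an $X$-record can still fall below $\ell_n$ and remain uncoupled. Your stated reason for $\tau<\infty$ therefore does not stand. The paper closes precisely this gap by using condition~\Conn(i) in an essential way: since $F_k^{\leftarrow}(1-)\le F^{\leftarrow}(1-)$ for all~$k$, one has $F(M_{T_2})<1$ a.s., and hence (under~\Con) the $\Xc$-sequence a.s.\ produces a record value $\Xc_n=\Mc_n>M_{T_2}$ at some $n\ge T_2$; at that moment $\Xc_n>\ell_n$, so $X_n=\Xc_n$ and $M_n\ge X_n>M_{T_2}$. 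Only from this time $T_3$ onward does the two-sided implication $\Ic_n=1\Leftrightarrow I_n=1$ that you sketch actually go through, because one can now guarantee that any $k<n$ with $X_k\ge X_n$ satisfies $k\ge T_2$ and hence $X_k=\Xc_k$. Your passing remark that condition~(i) ``keeps $X_n$ from overshooting $F^{\leftarrow}(1-)$'' is the right ingredient, but it must be fed into the finiteness argument for your~$\tau$, not merely into the construction below the threshold.
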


The following corollary is a  straightforward  consequence of the coupling established in Theorem~\ref{th1} and the observation that $E_n,$ $V_n\to\infty$ as $n\to \infty$ under condition~\Con.

\begin{coro}\label{co1}
Under the condtions of Theorem~\ref{th1},   the above  assertions {\rm (A1)--(A4)} concerning the limiting behavior of $\Nc_n$ as $n\to\infty$ remain true if we replace in them $\Nc_n$ with $N_n$, the definitions of $E_n, V_n$ staying unchanged.
\end{coro}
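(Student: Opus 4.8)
The plan is to carry out everything on the common probability space furnished by Theorem~\ref{th1}, on which the coupling $(M_n,I_n)=(\Mc_n,\Ic_n)$ holds for every $n\ge T$, where $T<\infty$ a.s. Since none of the assertions (A1)--(A4) depends on the joint law of $\boldsymbol X$ and $\boldsymbol\Xc$ --- they concern only the marginal behavior of $N_n$ (resp.\ $\Nc_n$) --- it suffices to prove the counterparts of (A1)--(A4) for $N_n$ on that space.

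First I would note that the discrepancy $R_n:=N_n-\Nc_n=\sum_{k=1}^{n}(I_k-\Ic_k)$ stabilizes: $R_n=R_{T-1}$ for all $n\ge T$ (with $R_0:=0$), so $R_n$ converges a.s.\ to the finite r.v.\ $R_\infty:=\sum_{k=1}^{T-1}(I_k-\Ic_k)$, and in particular $\sup_{n}|R_n|<\infty$ a.s. Thus $N_n=\Nc_n+R_n$ with an a.s.\ bounded, eventually constant remainder.

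The remaining step is to feed this decomposition into each of (A1)--(A4), using that under \Con\ one has $s_n\to\infty$ (hence $\ln s_n\to\infty$) and $E_n,V_n\to\infty$, as already observed in the text. For (A1), $N_n\to\infty$ a.s.\ iff $\Nc_n\to\infty$ a.s., because the two differ by an eventually constant amount. For (A2) and (A3), write $N_n/E_n=\Nc_n/E_n+R_n/E_n$ and $(N_n-E_n)/\ln s_n=(\Nc_n-E_n)/\ln s_n+R_n/\ln s_n$; since $R_n$ is a.s.\ bounded while $E_n,\ln s_n\to\infty$, the extra terms vanish a.s., so the a.s.\ limits in (A2) and in all three parts of (A3) are inherited verbatim. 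For (A4): if $\lim_n V_n<\infty$, then $N_n-E_n=(\Nc_n-E_n)+R_n$ is a sum of two a.s.-convergent sequences and hence converges a.s.\ to a proper r.v.; if $\lim_n V_n=\infty$, then $V_n^{-1/2}(N_n-E_n)=V_n^{-1/2}(\Nc_n-E_n)+V_n^{-1/2}R_n$, where the last term tends to $0$ a.s.\ (so in probability), and Slutsky's theorem combined with (A4) for $\Nc_n$ yields $V_n^{-1/2}(N_n-E_n)\todi Z\sim N(0,1)$.

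This is a pure transfer-through-coupling argument, so there is no genuine obstacle; the only point to keep in mind is that \Con\ forces $E_n,V_n,\ln s_n\to\infty$, which is precisely what renders the $O(1)$ coupling discrepancy $R_n$ asymptotically negligible under the normalizations appearing in (A2)--(A4).
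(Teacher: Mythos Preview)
Your argument is correct and follows exactly the route the paper intends: the corollary is stated there as ``a straightforward consequence of the coupling established in Theorem~\ref{th1} and the observation that $E_n,V_n\to\infty$ as $n\to\infty$ under condition~\Con,'' and your write-up simply spells this out. In fact your treatment of the first alternative in~(A4) (the case $\lim_n V_n<\infty$) is more careful than the paper's one-line justification, since you do not rely on $V_n\to\infty$ there but instead use that $R_n$ itself converges a.s.
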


One of the key components of  \Conn\ is the condition that $\pr (\Mc_n >\ell_n\ {\rm ev.})=1.$ We will obtain a criterion for that relation  as a consequence of our Theorem~\ref{th2} on the growth rate of the  extremal process. The   assertion that theorem builds on is the criterion for the partial maxima of i.i.d.\ r.v.'s which was derived in~\cite{Kl84, Kl85} and is stated in~\eqref{Klass} below. For comments on the history of the   problem  on the growth rate for the partial  maxima in the i.i.d.\ case see~\cite{Kl84}. For an alternative martingale-based proof of  criterion~\eqref{Klass}  see~\cite{Go87}.

First we need to recall a constructive definition of the extremal process associated with the d.f.~$F$ (cf.\ Ch.~4 in~\cite{Re87}). Let $\mathscr{P}$ be a Possion point process on $\R^2$ with the intensity measure
$\la$ specified by
\[
\la\bigl((a,b)\times (x,\infty)\bigr)=(a-b)\ln F(x), \quad a<b,\ x>x_0,
\]
and $\la\bigl(\R\times (-\infty,x_0]\bigr)=0,$ where $x_0:=\inf\{x\in\R: F(x) >0\}$ and we assume for simplicity that $F(x_0)=0$ if $x_0>-\infty$ (which is no loss of generality as we are interested in the behavior of the  upper records only).   Introduce the following notation:
\begin{equation}
\label{L}
L(B):= \inf\bigl\{x\in \R:  \mathscr{P}\bigl(B\times (x,\infty)\bigr)=0\bigr\}, \quad B\subset \R.
\end{equation}
The (continuous--time) extremal process $\{\mathscr M_t\}_{t> 0}$ is then defined by
$
{\mathscr M_t}:= L((0,t]), $ $t> 0.$ Clearly,
\begin{equation}
\label{embed}
{\mathscr M_{s_n}}=\bigvee_{j=1}^n \Xc_j^*, \quad n\ge 1,
\end{equation}
where  $ {\mathscr \Xc_j^*}:=L\bigl( (s_{j-1}, s_j] \bigr) $, $ j\ge 1,$ are independent r.v.'s,
\[
\pr (\Xc_j^*\le x) = \pr \bigl(\mathscr{P}  \bigl((s_{j-1},s_j]\times (x,\infty)\bigr)=0\bigr)=
e^{ \al_j\ln F(x)}=F^{\al_j} (x), \quad x\in\R,
\]
for $j\ge 1$ (and likewise $\pr (\mathscr{M}_t\le x)=F^t (x),$ $t>0$), so that $\{\Xc_n^*\}_{n\ge 1} \deq \{\Xc_n\}_{n\ge 1}$ and hence $\{\Mc_n\}_{n\ge 1} \deq \{\mathscr M_{s_n}\}_{n\ge 1}.$ This remarkable embedding of the partial maxima sequence for  an $F^\al$-scheme into a continuous--time extremal process is one of the powerful tools for solving problems related to such schemes, see e.g.~\cite{BaRe87}. It proved to be very handy in our case as well.

Let $b_t,$ $t\ge 0,$ be a non-decreasing right-continuous real-valued  function. As in the discrete-time case, we will use
$
\{\mathscr{M}_t > b_t {\rm\ ev.}\}
$
to denote the event that $\mathscr{M}_t>b_t$ eventually, i.e.\ that $\sup\{t>0: \mathscr{M}_t \le b_t \}<\infty$. We do not assume in the following theorem that $F$ is continuous.

\begin{theo}\label{th2}
\begin{enumerate}
	\item[{\rm (i)}]
		One always has $\pr (\mathscr{M}_t > b_t {\rm \ ev.})=0$ or~$1. $	 

\item[\rm (ii)]  If $g_t:=1-F(b_t)\to c >0$ as $t\to\infty$ then $\pr (\mathscr{M}_t > b_t  {\rm\ ev.})=1.$
	
\item[\rm (iii)]  If $\lim_{t\to 0}g_t = 0$ and $\liminf_{t\to\infty}  tg_t <\infty$   then $\pr (\mathscr{M}_t > b_t  {\rm\ ev.})=0.$
	
\item[\rm (iv)]  If $\lim_{t\to 0}g_t = 0$ and $\lim_{t\to\infty}  tg_t =\infty$   then
\begin{align}
\label{J}
\pr (\mathscr{M}_t > b_t {\rm\ ev.})=
\left\{
\begin{array}{ll}
0 & \mbox{if}\ \   J(b) =\infty,\\
1 &  \mbox{if}\ \  J(b)  <\infty,
\end{array}
\right.
\quad \mbox{where}\   J(b):= \int_0^\infty g_t e^{-t g_t}dt.
\end{align}
\end{enumerate}
\end{theo}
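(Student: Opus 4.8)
The plan is to reduce everything to a statement about the Poisson point process $\mathscr{P}$ and its interaction with the curve $t\mapsto b_t$. Since $\mathscr{M}_t\le b_t$ is equivalent to $\mathscr{P}\bigl((0,t]\times(b_t,\infty)\bigr)\ge 1$, the event $\{\mathscr{M}_t>b_t\ \text{ev.}\}$ is the event that from some time on there are no points of $\mathscr{P}$ strictly above the graph of $b$. The $0$--$1$ law in part~(i) will follow by a tail-triviality argument: the event in question is measurable with respect to $\bigcap_{s>0}\sigma\bigl(\mathscr{P}\cap((s,\infty)\times\R)\bigr)$, and since $\mathscr{P}$ restricted to disjoint vertical strips is independent, this intersection is a trivial $\sigma$-algebra (a Kolmogorov-type zero--one law for the independent-increments structure of the extremal process).

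For the remaining parts I would compute, or rather estimate, the probability of the relevant tail event directly. The key quantity is, for $s<t$,
\[
\pr\bigl(\mathscr{M}_u > b_u\ \text{for all }u\in[s,t]\bigr),
\]
and since $b$ is non-decreasing and right-continuous, monotonicity lets me replace the continuum of constraints by a discrete skeleton: on a dyadic mesh $s=t_0<t_1<\cdots<t_m=t$ refining to the whole interval, the worst constraint on $[t_{i-1},t_i)$ is $\mathscr{M}_{t_i^-}>b_{t_{i-1}}$, roughly speaking, so the event is sandwiched between products of the form $\prod_i F^{t_i-t_{i-1}}(b_{t_{i-1}})$-type expressions. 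Taking logs turns this into $-\int_s^t(-\ln F(b_u))\,du$, and using $-\ln F(b_u)\asymp g_u$ when $g_u\to 0$ (part (ii) being the degenerate case $g_u\to c>0$, where I handle it separately: there $-\ln F(b_u)$ stays bounded away from $0$ and $\infty$, the expected number of points above the graph past time $s$ is finite for... no --- rather, one checks the Borel--Cantelli-type series $\sum$ over integer times of $\pr(\mathscr{M}_n\le b_n)$ and its complement). Concretely: $\{\mathscr{M}_t>b_t\ \text{ev.}\}^c = \{\mathscr{M}_t\le b_t\ \text{i.o.}\}$, and I want to decide when the graph of $b$ is crossed from below infinitely often by $\mathscr{M}$.

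The cleanest route for parts (iii) and (iv) is a Borel--Cantelli argument applied to the excursions of $\mathscr{M}$ below $b$. Partition $(0,\infty)$ into blocks $B_k=(\tau_{k-1},\tau_k]$ chosen so that $\int_{B_k} g_u\,du \asymp 1$; on each block the event $A_k:=\{\exists u\in B_k:\mathscr{M}_u\le b_u\}$ has probability bounded above and below by universal constants times $\int_{B_k} g_u e^{-\Lambda_{k}}\,du$ where $\Lambda_k:=\int_0^{\tau_{k-1}}(-\ln F(b_u))\,du \asymp \int_0^{\tau_{k-1}} g_u\,du$ accounts for there being no point above the graph before $\tau_{k-1}$ (one needs independence across the strip below $\tau_{k-1}$ and the strip $B_k$, which the Poisson structure gives, up to the mild coupling that a point in $B_k$ above $b$ in an \emph{early} part of $B_k$ could serve for later $u$ too; monotonicity of $b$ again controls this). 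Summing, $\sum_k \pr(A_k)$ converges or diverges together with $J(b)=\int_0^\infty g_t e^{-tg_t}\,dt$, after checking that $\int_0^{\tau_{k-1}} g_u\,du$ and $\tau_{k-1}g_{\tau_{k-1}}$ are comparable under the hypothesis $tg_t\to\infty$ — this is where the monotonicity of $b$ (hence of $g$ up to the transformation) and a summation-by-parts estimate are used. When the sum converges, Borel--Cantelli (first lemma) gives $\pr(A_k\ \text{i.o.})=0$, i.e. $\mathscr{M}_t>b_t$ ev.; when it diverges one needs the second Borel--Cantelli lemma, which requires \emph{independence} of the $A_k$ — this is \emph{not} literally true, so I expect this to be the main obstacle.

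The resolution of that obstacle is the standard one for such growth-rate problems (and the reason \cite{Kl84,Kl85} are invoked): one either (a) replaces $A_k$ by a slightly smaller event $A_k'\subset A_k$ that depends only on $\mathscr{P}$ restricted to a bounded strip $B_k\times(b_{\tau_{k-1}}, c_k]$ with the $c_k$ chosen so the $A_k'$ are genuinely independent and $\sum\pr(A_k')$ still diverges, or (b) invokes the fact, already available to us, that the i.i.d.\ criterion \eqref{Klass} of Klass is known and embeds $\{\mathscr{M}_{s_n}\}$ as the partial maxima of an $F^\alpha$-scheme; by specializing $\alpha_n$ and the sampling times one transfers the divergence case of \eqref{Klass} to the extremal process and then removes the restriction to a sequence of times via monotonicity of both $\mathscr{M}$ and $b$. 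I would present route (b) as the primary argument since it lets us quote \eqref{Klass} as a black box and only do the (routine but not trivial) work of discretizing time and verifying that $\int_0^{s_n} g_u\,du$ versus $\sum_{k\le n}\alpha_k g_{s_k}$ match up, with the final continuous-time statement following because between consecutive sampling points $\mathscr{M}$ can only increase while $b$ changes by a controlled amount. Part (ii) is then a short separate check: if $g_t\to c>0$ then $-\ln F(b_t)\to -\ln(1-c)\in(0,\infty)$, so $\int_0^{\tau_{k-1}}(-\ln F(b_u))\,du\to\infty$ linearly, making $\sum_k\pr(A_k)<\infty$ trivially, hence $\mathscr{M}_t>b_t$ ev.
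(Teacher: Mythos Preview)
Your route~(b) for part~(iv) is exactly what the paper does, with an even simpler discretization: set $Y_n:=L((n-1,n])$, so the $Y_n$ are i.i.d.\ with d.f.~$F$; monotonicity of $\mathscr{M}$ and $b$ sandwiches $\{\mathscr{M}_t>b_t\ \text{ev.}\}$ between $\{M^Y_n>b_{n+1}\ \text{ev.}\}$ and $\{M^Y_n>b_n\ \text{ev.}\}$, and Klass's criterion~\eqref{Klass} applies directly. The only remaining work is to check that $J(b)$ and $\Sigma(b)=\sum_n g_ne^{-ng_n}$ converge or diverge together, which follows from elementary monotonicity bounds on $x\mapsto xe^{-ax}$. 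No block partition or $F^\alpha$-scheme embedding is needed. For part~(i) your Poisson tail-triviality idea is a viable alternative to the paper's (which reuses the $\{Y_n\}$ discretization and Kolmogorov's 0--1 law, after a one-step shift to show the two sandwiching events have equal probability), but as written it is incomplete: the event is not literally in $\bigcap_{s>0}\sigma(\mathscr{P}|_{(s,\infty)\times\R})$, since $\mathscr{M}_t$ for large $t$ may still be carried by a point in $(0,s]$. One needs $\mathscr{M}_t\uparrow F^\leftarrow(1-)$ a.s., which requires disposing separately of the case where $F$ has an atom at its right endpoint --- the paper does this explicitly. (Also, your opening sentence has the inequality reversed: $\mathscr{M}_t\le b_t$ is equivalent to $\mathscr{P}((0,t]\times(b_t,\infty))=0$, not $\ge 1$.)

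The genuine gap is part~(iii). You fold it into the block/Borel--Cantelli framework, but that framework does not apply here: under $\liminf_{t\to\infty}tg_t<\infty$ one need not have $\int_0^\infty g_u\,du=\infty$, so the partition into blocks $B_k$ with $\int_{B_k}g_u\,du\asymp1$ may fail to exist; and even when it does, this is precisely the divergence direction where you yourself flag the dependence of the $A_k$ as the main obstacle, and both of your proposed fixes~(a) and~(b) are stated only under the hypothesis $tg_t\to\infty$ of part~(iv). The paper's argument for~(iii) is entirely different and much shorter: along a subsequence $t_n\to\infty$ with $t_ng_{t_n}<c<\infty$, one has directly $\pr(\mathscr{M}_{t_n}\le b_{t_n})=F^{t_n}(b_{t_n})=e^{-t_ng_{t_n}(1+o(1))}\ge e^{-c(1+o(1))}$, hence $\pr(\mathscr{M}_t\le b_t\ \text{i.o.})>0$, and then part~(i) forces $\pr(\mathscr{M}_t>b_t\ \text{ev.})=0$. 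Your treatment of~(ii) is workable but roundabout; the paper simply notes $\mathscr{M}_n\ge Y_n$ and $\pr\bigl(\bigcup_n\{Y_n\notin B\}\bigr)=1$ for $B=\bigcup_n(-\infty,b_n]$ when $c>0$.
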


\begin{rema}\label{re1}
\rm Note that the assumption that $b_t$ is non-decreasing  does not actually restrict the generality. Thus, for part~(iv), arguing as on p.~382 in~\cite{Kl84}, it is not hard to verify that if $b_t$ is a general real-valued function such that $ g_t\to 0,$ $tg_t\to\infty$ as $t\to\infty,$ then, for some $t_0<\infty$, the non-decreasing function 
\[
\overline{b}_t:=
\left\{
\begin{array}{ll}
b_{t_0}, & t\in (0,t_0),\\
\sup_{t_0\le u\le t} b_u,&  t\ge t_0, 
\end{array}
\right.
\]
has the property that  $\pr (\mathscr{M}_t > b_t {\rm\ ev.})=\pr (\mathscr{M}_t > \overline{b}_t {\rm\ ev.}).$
\end{rema}

Theorem~\ref{th2} enables us to give a complete characterization of the situations where the  condition $\pr (\Mc_n > \ell_n {\rm \ ev.})=1$ from Theorem~1 is satisfied.

\begin{coro}\label{co2}
The following assertions hold true for an $F^\al$-scheme under condition~\Con\ and a non-decreasing sequence $\{\ell_n\}$.
\begin{enumerate}
		\item[{\rm (i)}]
		One always has $\pr (\Mc_n > \ell_n {\rm \ ev.})=0$ or~$1. $
		
	\item[{\rm (ii)}]  If $q_n:=1-F(\ell_n)\to c>0$ as $n\to\infty$  then $\pr (\Mc_n > \ell_n {\rm \ ev.})=1. $

\item[{\rm (iii)}] If $\lim_{n\to\infty}q_n= 0$ and $\liminf_{n\to \infty} s_n q_n< \infty$ then $\pr (\Mc_n > \ell_n {\rm \ ev.})=0. $

\item[{\rm (iv)}] If $\lim_{n\to\infty}q_n= 0$ and $\lim_{n\to \infty} s_n q_n=\infty$ then
\end{enumerate}
\[
\pr (\Mc_n > \ell_n {\rm \ ev.})=
\left\{
\begin{array}{ll}
0 & \mbox{if}\ \   K(\ell) =\infty,\\
1 &  \mbox{if}\ \  K(\ell)  <\infty,
\end{array}
\right.
\quad \mbox{where}\  K(\ell):=
\sum_{n\ge 1}  e^{-s_nq_n} (1- e^{-\al_{n+1}q_n}).
\]
\end{coro}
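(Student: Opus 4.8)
The plan is to derive Corollary~\ref{co2} from Theorem~\ref{th2} by applying the latter to the step function $b_t$ that interpolates the thresholds $\ell_n$ along the time scale set by the partial sums $s_n$, exploiting the embedding~\eqref{embed} of the discrete-time maxima $\Mc_n$ into the extremal process via $\Mc_n\deq \mathscr{M}_{s_n}$. Concretely, since $\{\ell_n\}$ is non-decreasing, set $b_t:=\ell_n$ for $t\in(s_{n-1},s_n]$ (with $b_t:=\ell_1$ for $t\le s_1$), which is a non-decreasing right-continuous function with $b_{s_n}=\ell_n$. Because $\mathscr{M}_t$ is non-decreasing in $t$ and $b_t$ is constant on each interval $(s_{n-1},s_n]$, one checks that $\{\mathscr{M}_t>b_t$ ev.$\}=\{\mathscr{M}_{s_n}>\ell_n$ ev.$\}$ up to a null event: the event $\mathscr{M}_t\le b_t$ for some $t\in(s_{n-1},s_n]$ forces $\mathscr{M}_{s_{n-1}}\le\mathscr{M}_t\le b_t=\ell_n$, and conversely $\mathscr{M}_{s_n}\le\ell_n$ is one such instance; a short monotonicity argument (using $s_n\to\infty$, which follows from~\Con) makes the two "eventually" events coincide. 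Hence $\pr(\Mc_n>\ell_n$ ev.$)=\pr(\mathscr{M}_t>b_t$ ev.$)$, and parts~(i)--(iv) of Corollary~\ref{co2} will follow once we translate the hypotheses and the integral $J(b)$ of Theorem~\ref{th2} into the stated discrete quantities.

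For the translation of hypotheses, note $g_t=1-F(b_t)=1-F(\ell_n)=q_n$ for $t\in(s_{n-1},s_n]$, so $g_t\to c$ iff $q_n\to c$, giving~(i) and~(ii) immediately from Theorem~\ref{th2}(i),(ii). For~(iii) and~(iv), the relevant product is $tg_t$, which on $(s_{n-1},s_n]$ ranges over $(s_{n-1}q_n,\,s_nq_n]$; since $q_n\to 0$ the ratio $s_n/s_{n-1}=1+\al_n/s_{n-1}\to 1$ need not hold in general, but we do not need it — we only need that $\liminf_t tg_t<\infty\iff\liminf_n s_nq_n<\infty$ and $\lim_t tg_t=\infty\iff\lim_n s_nq_n=\infty$, and these equivalences hold because $s_{n-1}q_n$ and $s_nq_n$ differ by $\al_nq_n$, whose behavior is controlled once one observes $\al_nq_n\le \al_n/s_n\cdot s_nq_n$ is not automatically small — so here one should argue more carefully, perhaps replacing $b_t$ on $(s_{n-1},s_n]$ by a value making $tg_t$ monotone, or noting that on that interval the minimum of $tg_t$ is $s_{n-1}q_n\ge s_{n-2}q_{n-1}\cdot(q_n/q_{n-1})$; cleanest is to check directly that $\inf_{t\ge s_{n-1}}tg_t$ and $\sup_{t\le s_n}tg_t$ both capture the same limit/liminf as the sequence $s_nq_n$). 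Granting that, Theorem~\ref{th2}(iii),(iv) apply and deliver parts~(iii),(iv) of the corollary once $J(b)$ is computed.

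The core computation is evaluating $J(b)=\int_0^\infty g_te^{-tg_t}\,dt$ with the step function above and matching it with $K(\ell)$. On the interval $(s_{n-1},s_n]$ the integrand equals $q_ne^{-tq_n}$, so
\[
\int_{s_{n-1}}^{s_n} q_n e^{-tq_n}\,dt = e^{-s_{n-1}q_n}-e^{-s_nq_n}=e^{-s_{n-1}q_n}\bigl(1-e^{-(s_n-s_{n-1})q_n}\bigr)=e^{-s_{n-1}q_n}\bigl(1-e^{-\al_nq_n}\bigr),
\]
using $s_n-s_{n-1}=\al_n$. Summing over $n\ge 1$ (with the convention $s_0=0$) gives $J(b)=\sum_{n\ge 1}e^{-s_{n-1}q_n}(1-e^{-\al_nq_n})$, which after re-indexing $n\mapsto n+1$ is exactly $\sum_{n\ge 0}e^{-s_nq_{n+1}}(1-e^{-\al_{n+1}q_{n+1}})$; comparing with $K(\ell)=\sum_{n\ge 1}e^{-s_nq_n}(1-e^{-\al_{n+1}q_n})$ shows the two series have the same indices shifted and use $q_{n+1}$ versus $q_n$ in the exponent — so the final step is to show $J(b)=\infty\iff K(\ell)=\infty$, i.e.\ that replacing $q_{n+1}$ by $q_n$ in the $n$th term does not change convergence. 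This is the main obstacle: since $q_n\downarrow$ (as $\ell_n\uparrow$ and $F$ is non-decreasing) but the ratio $q_{n+1}/q_n$ need not be bounded away from $0$, one must argue that the summands are nonetheless comparable up to the convergence dichotomy. I expect this to follow from $e^{-s_nq_{n+1}}\ge e^{-s_nq_n}$ together with $1-e^{-\al_{n+1}q_{n+1}}$ versus $1-e^{-\al_{n+1}q_n}$ being comparable (both are $\asymp\min(1,\al_{n+1}q_\bullet)$), combined with a summation-by-parts or Abel-type estimate exploiting monotonicity of $\{q_n\}$ and of $\{s_n\}$; alternatively, one can bypass the issue by choosing $b_t$ on $(s_{n-1},s_n]$ to equal $\ell_{n-1}$ rather than $\ell_n$ (still non-decreasing, still with the same "eventually" event since thresholds are non-decreasing and the gap is a single index), which produces $q_n$ in the $n$th integral term and $\al_{n+1}$ via $s_{n+1}-s_n$ on adjusting the interval, yielding $K(\ell)$ on the nose. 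I would pursue the latter construction first, as it likely removes the comparison step entirely, and fall back on the Abel estimate only if the index bookkeeping does not close cleanly.
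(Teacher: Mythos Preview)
Your approach is exactly the paper's, but you have made life hard for yourself with the wrong indexing of the step function. The paper sets $s_0:=0$ and
\[
b_t:=\ell_n\quad\text{for }t\in[s_n,s_{n+1}),\ n\ge 1,
\]
i.e.\ the threshold $\ell_n$ is used on the interval \emph{starting} at $s_n$, not ending there. With this choice, $b_{s_n}=\ell_n$ and, since $\mathscr{M}_t$ is non-decreasing and $b_t$ is constant on each $[s_n,s_{n+1})$, one has $\{\mathscr{M}_t>b_t\text{ ev.}\}=\{\mathscr{M}_{s_n}>\ell_n\text{ ev.}\}$ on the nose; the hypotheses of Theorem~\ref{th2}(iii),(iv) translate immediately because on $[s_n,s_{n+1})$ one has $g_t=q_n$ and hence $tg_t\ge s_nq_n$ with equality at $t=s_n$; and the integral becomes
\[
J(b)=\sum_{n\ge 1}\int_{s_n}^{s_{n+1}}q_ne^{-tq_n}\,dt=\sum_{n\ge 1}e^{-s_nq_n}\bigl(1-e^{-\al_{n+1}q_n}\bigr)=K(\ell)
\]
exactly, with no index shift and no need to compare $q_n$ with $q_{n+1}$. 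This is precisely the ``alternative'' you propose at the end; the paper goes straight to it, and all of your obstacles (the $q_n$ vs.\ $q_{n+1}$ comparison, the $s_{n-1}q_n$ vs.\ $s_nq_n$ issue for part~(iv)) simply do not arise.
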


\begin{rema}
	{\rm Note that without condition~\Con\ the assertions of the theorem do not need to hold. Indeed, if $s_\infty:=\lim_{n\to\infty} s_n<\infty $ then there are only finitely many records in the sequence  $\boldsymbol{\Xc}$ and $\Mc_\infty:=\lim_{n\to\infty} \Mc_n\deq \mathscr{M}_{ s_\infty  }$ is a proper non-degenerate r.v.  Hence for, say, a constant sequence  $\ell_n\equiv \ell$ one would have $\pr (\Mc_n > \ell_n {\rm \ ev.})= \pr (\mathscr{M}_{ s_\infty }>\ell )=F^{s_\infty} (\ell), $ which can  be neither~0 nor~1 etc.}
\end{rema}

Finally, we will address the question concerning  the  dependence of the record indicators in threshold $F^\al$-schemes. As was pointed out earlier, in the general case, even when the $X_n$'s are independent,  the nature  of dependence between record indicators  is very complicated and difficult to describe. However, we were able to obtain the following asymptotic pair-wise independence result    in the case of independent $X_n$'s in a threshold $F^\al$-scheme with a flat threshold $\ell_n=\ell ,$ $n\ge 1.$  It is actually  possible to extend the result of Theorem~\ref{th3}  to assert that, say, $\pr(I_n=1|I_{m_1}=1, I_{m_2}=1)$ and $ \pr(I_n =1)$ are asymptotically equivalent as $n>m_2>m_1\ge k\to\infty$ etc, but the set of conditions  for such an assertion will already  be  quite cumbersome.

\begin{theo}\label{th3}
Assume that the r.v.'s in the sequence  $\boldsymbol{X}$ are independent,  condition  \Conn\ is satisfied for a constant threshold sequence  $\ell_n=\ell $ and $\de_n=0,$  $n\ge 1 $. If
\begin{equation}
\label{con_s_al}
\lim_{k\to \infty}\frac{	s^2_k}{\al_k }(\al_k  \vee 1) h^{s_k }= 0, \quad\mbox{where}\quad  h:=F(\ell),
\end{equation}
	then
\begin{equation}
\label{to_1}
\sup_{n>m\ge k}\biggl|
\frac{\pr(I_n=1|I_m=1)}{ \pr(I_n =1)}-1\biggr|\to 0\quad as \quad k\to \infty.
\end{equation}
\end{theo}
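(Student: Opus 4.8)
\textbf{Proof proposal for Theorem~\ref{th3}.}

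The plan is to work entirely within the explicit $F^\al$-scheme (since $\de_n=0$ and the $X_n$'s are independent, the sequence $\boldsymbol X$ restricted to records \emph{is} an $F^\al$-scheme above $\ell$, so I may and will identify $\boldsymbol X$ with $\boldsymbol{\Xc}$ in the relevant computations, writing $p_n=\al_n/s_n$ for $\pr(I_n=1)$). The key object is the joint probability $\pr(I_m=1,\ I_n=1)$ for $n>m\ge k$, and the aim is to show it is asymptotically $p_mp_n$, uniformly. I would first decompose according to the value of $M_{m}$ relative to the threshold $\ell$. On the event $\{M_{m-1}>\ell\}$ (equivalently $\{X_i>\ell \text{ for some } i\le m-1\}$, since the $\ell_n$ are flat), the record structure is governed purely by the $F^{\al_j}$ tails, exactly as in the genuine $F^\al$-scheme, where record indicators are independent; so the ``bad'' contribution to the discrepancy comes only from the event $\{M_{m-1}\le \ell\}$, whose probability is $\prod_{j<m}F^{\al_j}(\ell)=h^{s_{m-1}}$. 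This is where the hypothesis~\eqref{con_s_al} enters: $h^{s_k}\to 0$ fast enough (with the $s_k^2\al_k^{-1}(\al_k\vee1)$ prefactor) to kill these correction terms after dividing by $p_n p_m$.

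More concretely, I would write, using~\eqref{I_M} (independence of $\Mc_n$ from $\Ic_1,\dots,\Ic_n$ in the $F^\al$-scheme) and the extremal-process embedding~\eqref{embed},
\[
\pr(I_m=1,I_n=1)=\exn\bigl[\ind(I_m=1)\,\pr(X_n>M_{n-1}\mid X_1,\dots,X_{m})\bigr],
\]
and split the inner conditional probability according to whether the running maximum over indices $m,\dots,n-1$ has already exceeded $\ell$. On $\{M_{m}>\ell\}$ the conditional law of the future evolution coincides with that in the pure $F^\al$-scheme, giving the ``main term'' $p_m p_n$ up to the contribution of $\{M_m\le\ell\}$. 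On $\{M_m\le\ell\}$ one has the crude bound $\pr(X_n>M_{n-1}, M_m\le\ell)\le \pr(M_m\le \ell)\wedge p_n\le h^{s_m}\wedge p_n$, and similarly $\pr(I_m=1,M_{m-1}\le\ell)\le \pr(M_{m-1}\le\ell)=h^{s_{m-1}}$, while on the complementary event the indicators genuinely decouple. Collecting terms,
\[
\Bigl|\pr(I_m=1,I_n=1)-p_m p_n\Bigr|\ \le\ C\bigl(h^{s_{m-1}}p_n + h^{s_{n-1}} + p_m h^{s_{m-1}}\bigr)
\]
for an absolute constant $C$; the middle term arises because on $\{M_{m-1}\le\ell\}$ a record at time $n$ can still occur. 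Dividing by $p_mp_n=\al_m\al_n/(s_ms_n)$ and estimating $s_n/\al_n\le s_n^2/\al_n\le (s_n^2/\al_n)(\al_n\vee 1)$, each summand is bounded by a constant times an expression of the type $(s_r^2/\al_r)(\al_r\vee 1)\,h^{s_r}$ with $r\in\{m-1,m,n-1\}\ge k-1$, which by~\eqref{con_s_al} tends to $0$ as $k\to\infty$, uniformly in $n>m\ge k$ (here one uses that $s_r$ is non-decreasing and $h<1$, so $h^{s_{n-1}}\le h^{s_{m-1}}$, to reduce everything to indices near $k$; one may need the elementary observation that $t\mapsto (t^2)h^t$ is eventually decreasing to transfer the bound from general $r\ge k-1$ to $r=k-1$, possibly after a harmless re-indexing by one).

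The main obstacle I anticipate is the bookkeeping on $\{M_{m-1}\le\ell\}$: one must be careful that the ``future'' maximum $M_{n-1}\vee\dots$ can straddle $\ell$, so the conditional probability $\pr(X_n>M_{n-1}\mid\cdot)$ is not simply $p_n$ on that event, and a clean bound requires comparing with the event that the maximum over $m,\dots,n-1$ first exceeds $\ell$ — effectively a first-passage decomposition in the embedded extremal process. The cleanest route is probably to bound $\pr(I_m=1,I_n=1,M_{m-1}\le\ell)$ by $\pr(M_{m-1}\le\ell, X_n>\ell)=h^{s_{m-1}}(1-h^{\al_n})\le h^{s_{m-1}}(\al_n\vee1)$, since a record at time $n\ge m$ forces $X_n>M_{m-1}$ and, if additionally $M_{m-1}\le\ell$ occurred together with an earlier record at $m$, one can still only pay the marginal tail of $X_n$ above the (possibly sub-$\ell$) value $M_{m-1}$ — but the simplest sufficient crude bound $h^{s_{m-1}}$ already suffices after noting $(1-h^{\al_n})\le \al_n(1-h)\le \al_n\vee 1$ up to a constant. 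Once this term is controlled, the argument is a routine uniform estimate; the only real input beyond~\eqref{a/s}, \eqref{I_M} and~\eqref{embed} is the hypothesis~\eqref{con_s_al}, which is exactly calibrated so that $h^{s_k}$ beats the worst-case ratio $1/(p_mp_n)$.
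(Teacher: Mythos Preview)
Your overall strategy---decompose according to whether $M_m>\ell$, exploit that above $\ell$ the process behaves like the genuine $F^\al$-scheme, then show the sub-$\ell$ contributions are negligible---matches the paper's. But there is a real gap in the ``main term'' step.

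You assert that on $\{M_m>\ell\}$ the record indicators decouple and the contribution is $p_mp_n$ (up to a factor $\pr(M_m>\ell)$). This implicitly uses that $\Mc_m$ is independent of $\Ic_n$ for $n>m$. Property~\eqref{I_M} only gives independence of $\Mc_m$ from $\Ic_1,\ldots,\Ic_m$; for later indices $\Ic_n$ there \emph{is} dependence (a large $\Mc_m$ makes $\Ic_n=1$ less likely). So the identity $\pr(\Ic_m=\Ic_n=1,\ \Mc_m>\ell)=p_mp_n(1-h^{s_m})$ is false in general, and your displayed bound $|\pr(I_m=I_n=1)-p_mp_n|\le C(\cdots)$ is not established. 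The paper does not bypass this: it computes $P_1:=\pr(I_m=I_n=1,\ M_m>\ell)$ explicitly by a double integral over the joint density, obtaining
\[
P_1=\frac{\al_m\al_n}{s_n-s_m}\biggl(\frac{1-h^{s_m}}{s_m}-\frac{1-h^{s_n}}{s_n}\biggr),
\]
and then uses the mean value theorem (for $x\mapsto xh^x$) to show $(s_ms_n/\al_m\al_n)P_1\to 1$ uniformly. This computation is the heart of the proof and cannot be replaced by the independence assertion you make.

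A secondary issue: you identify $\pr(I_n=1)$ with $p_n=\al_n/s_n$, but in the threshold scheme $\pr(I_m=1)=\frac{\al_m}{s_m}(1-h^{s_m})+b_m$ with $b_m=\pr(I_m=1,M_m\le\ell)$; one must first show $\pr(I_m=1)\sim p_m$. Also, decomposing by $M_{m-1}$ rather than $M_m$ introduces $h^{s_{m-1}}$, and after dividing by $p_m$ you need $(s_m/\al_m)h^{s_{m-1}}\to 0$; since $h^{s_{m-1}}=h^{s_m}/h^{\al_m}$ this is not controlled by~\eqref{con_s_al} when $\al_m$ is large. The paper's decomposition by $M_m$ (and by $M_n$ for the residual $c_{m,n}$) and its exact evaluation of the intermediate term $P_2=\pr(I_m=I_n=1,\ M_m\le\ell<M_n)$ avoid this; the bound on $(s_ms_n/\al_m\al_n)P_2$ again relies on the mean value theorem and is where the precise form of~\eqref{con_s_al} (in particular the factor $s_m^2/\al_m$) is actually used.
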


\smallskip

\begin{rema}{\rm
We can somewhat extend the conditions of the last theorem by assuming that we have a sequence $\boldsymbol{X}^{(k)}=\{X_n^{(k)}\}_{n\ge 1},$ $k\ge 1,$ of threshold $F^\al$-schemes indexed by the parameter  $k\to\infty$ and that these schemes share a common d.f.~$F$ and sequence~$\bal,$ but have  growing thresholds $\ell=\ell^{ (k)}$ such that $h=h^{ (k)}:=F(\ell^{ (k)})\to 1$. Then relation~\eqref{to_1} will still hold true under   assumption~\eqref{con_s_al} with $h=h^{(k)}.$ We just note here that the upper bound for the last term  in~\eqref{ssaa2} in the proof of Lemma~\ref{le2} will be valid when $(h^{(k)})^{s_k }<e^{-2}$ and that~\eqref{con_s_al} (with $h=h^{(k)}$) ensures that $(h^{(k)})^{s_k }\to 0 $ as $k\to\infty.$
}
\end{rema}

\section{Proofs}

\begin{proof}[Proof of Theorem~\ref{th1}] To construct the desired coupling, we will start with a sequence of i.i.d.\ uniform-$(0,1)$ r.v.'s $U_1, U_2, \ldots$ given on some  probability space, set $X_1:=F_1^{\leftarrow} (U_1),$
\begin{equation}
\label{Xn}
 X_n:=F_{n|X_1,\ldots,X_{n-1}}^\leftarrow (U_{n })\quad\mbox{for}\quad n\ge 2,
\end{equation}
and let $\Xc_n :=(F^{\al_n})^\leftarrow (U_n)=F^\leftarrow (U_n^{1/\al_n})  ,$ $n\ge 1.$
These sequences of r.v.'s will clearly have the desired distributions.

Next introduce the events
\begin{align*}
A_n
 & := \{X_n>\ell_n\}= \{U_n>F_{n|X_1,\ldots,X_{n-1}}(\ell_n)\}
 =\{U_n>F_n (\ell_n)\},
 \\
\Ac_n &
:= \{\Xc_n>\ell_n\}= \{U_n>F^{\al_n}(\ell_n)\}
\end{align*}
for $n\ge 2,$ where we   used condition~\Conn(ii)  for the last equality in the first line. Observe that, for the symmetric difference of these events, one has
\[
\pr (A_n\triangle  \Ac_n)
  = |F_n (\ell_n)- F^{\al_n}(\ell_n)|\le \de_n.
\]
Therefore, by the Borel--Cantelli lemma and condition     \Conn (iii), these events occur finitely often a.s., so that
\[
T_1:  = \inf\Bigl\{n\ge 1: \sum_{k\ge n}  \ind ( A_k \triangle \Ac_k)=0 \Bigr\}
 <\infty \quad\mbox{a.s.}
\]
Now introduce  the events $B_n:= A_n\cap \Ac_n=\{U_n>F_n (\ell_n) \vee F^{\al_n} (\ell_n)=:l_n\},$ $n\ge 1.$ Clearly,
\[
d_{TV} \bigl(F_n |_{F_n^{\leftarrow}(l_n)}, F^{\al_n}|_{(F^{\al_n})^{\leftarrow}(l_n)}\bigr) \le \de_n.
\]
Therefore, using Dobrushin's maximal coupling theorem~\cite{Do70},  we can recursively re-define for $n\ge 2$ the r.v.'s $X_n$ and $\Xc_n$ on the set $B_n$ only (extending the underlying probability space if necessary and updating at each step the definitions~\eqref{Xn} for the ``later" observations  $X_{n+1}, X_{n+2},\ldots$ accordingly) in such a way that
\[
\pr (X_n\neq \Xc_n; B_n)\le \de_n.
 \]
Again by the Borel--Cantelli lemma  and conditions~\Conn(iii),(iv), one has
\[
T_2:  = \inf\Bigl\{n \ge T_1: \sum_{k\ge n} \bigl( \ind    (X_k\neq \Xc_k; B_k)+ \ind (\Mc_k\le \ell_k)\bigr)=0 \Bigr\}
<\infty \quad\mbox{a.s.}
\]
Now for $n\ge T_2$ we always have $ \Mc_n >  \ell_n$ and whenever one of the events $A_n, \Ac_n$ occurs, the other one occurs as well and $X_n=\Xc_n$ for that~$n$.

In view of condition~\Conn(i), one has $F (M_{T_2})<1$ a.s. Therefore, by virtue of~\Con, with probability~1 there exists an~$n\ge T_2$ such that $\Xc_n=\Mc_n> M_{T_2}$. From the definition of~$T_2$, one then also has $\Xc_n >\ell_n$ and  $X_n= \Xc_n,$ so that  $M_n> M_{T_2}$. We conclude that
\[
T_3:= \inf\{n> T_2: M_n> M_{T_2},  \Mc_n> \Mc_{T_2}, \Ic_n=1 \}<\infty \quad\mbox{a.s.}
\]
	
We claim that  $(M_n, I_n)=(\Mc_n, \Ic_n)$ for all $n\ge T_3.$ Indeed, assume that  $\Ic_n=1$ for an $n\ge T_3$. Then $\Xc_n=\Mc_n> \ell_n $ and so also $X_n=\Xc_n> \ell_n .$ Now if~$I_n=0$ then there exists a $k\in [T_2, n] $ such that $X_k>X_n>\ell_n\ge \ell_k$, which implies that $X_k=\Xc_k $ (by the definition of~$T_2$) and so $\Xc_k>\Xc_n$, which contradicts the assumption that $\Ic_n=1$. A symmetric argument shows that if $I_n=1$ then also $\Ic_n=1$, thus proving  that $I_n= \Ic_n $ for all  $n\ge T_3.$ Now, whenever $\Xc_n$ with $n\ge T_2$ is a record we must have $\Xc_n=\Mc_n>\ell_n,$ which ensures that $X_n=\Xc_n$. Since  we already know that    at such times $n\ge T_3$ there will be a record in $\boldsymbol{X}$ as well, we obtain that $M_n=X_n=\Xc_n=\Mc_n$. Theorem~\ref{th1} is proved.
\end{proof}

\begin{proof}[Proof of Theorem~\ref{th2}]
(i)~Recalling notation~\eqref{L}, set
\[
Y_n:=L((n-1,n]), \quad n\in \Z, \qquad M^Y_m:=\bigvee_{k=1}^m Y_k, \quad m\ge 1.
\]
By construction, the $Y_n$'s are i.i.d.\ with d.f.~$F$. As both $\mathscr{M}$ and $b$ are non-decreasing, we have (using $\lfloor t\rfloor$ for the integer part of $t$)
\begin{align}
\{M^Y_n > b_{n+1} {\rm\ ev.}\}
  = \{\mathscr{M}_t > b_{\lfloor t\rfloor +1} {\rm\ ev.}\}
& \subset
\{\mathscr{M}_t > b_t {\rm\ ev.}\}
\notag \\
 &
\subset
\{\mathscr{M}_t > b_{\lfloor t\rfloor} {\rm\ ev.}\}
= \{M^Y_n > b_{n} {\rm\ ev.}\}.
\label{MM}
\end{align}

We can assume w.l.o.g.\  that there is no $x_0$ such that $F(x_0-)<F(x_0)=1,$ as if such a point existed then one would have $M^Y_n=x_0$ ev.\ (and hence $\mathscr{M}_t=x_0$  ev.) a.s., making assertion~(i) obvious. With that assumption, clearly $\pr (A)=1$ for $A:=\bigl\{\sum_{k=1}^\infty \ind(Y_{k+1} >M_k)=\infty\bigr\}$ and, for any $k\ge 1,$
\begin{align*}
A\{M^Y_n > b_{n} {\rm\ ev.}\}
 & = A \bigl\{ \mbox{$\bigvee_{j=k}^n$} Y_j > b_n  {\rm\ ev.}\bigr\}
  \\
  & = \biggl\{\sum_{m=k+1}^\infty \ind\bigl(Y_m > \mbox{$\bigvee_{j=k}^{m-1}$}Y_j \bigr)=\infty;
\    \mbox{$\bigvee_{j=k}^n$} Y_j > b_n  {\rm\ ev.} \biggr\}.
\end{align*}
Therefore the event on the left-hand side belongs to the tail $\si$-algebra for $\{Y_n\}_{n\ge 1}.$ We conclude that, by Kolmogorov's 0--1 law, its probability must be either~0 or~1  and that  the same applies to  $\pr (M^Y_n > b_{n} {\rm\ ev.})$ as well since  $\pr (A)=1$. The same argument is valid for $\pr (M^Y_n > b_{n+1} {\rm\ ev.})$.

It remains to observe that  $\{M_n\}_{n\ge 2} \deq \{Y_0\vee M_{n-1}\}_{n\ge 2}$ and so
\begin{align}
\pr (M^Y_n > b_{n} {\rm\ ev.})
& = \pr (Y_0\vee M^Y_{n-1} > b_{n} {\rm\ ev.})
 = \pr \bigl(A\{ Y_0\vee M^Y_{n-1} > b_{n} {\rm\ ev.}\}\bigr)
  \notag
\\
&
 =
  \pr \bigl( A\{ M^Y_{n-1} > b_{n} {\rm\ ev.}\}\bigr)
   = \pr (  M^Y_{n} > b_{n+1} {\rm\ ev.}  ).
   \label{n_n+1}
\end{align}
Assertion~(i) is proved, as the probabilities of the left-hand and right-hand sides in~\eqref{MM} are equal to each other, their only possible values being~0 and~1.

\medskip
(ii)~This assertion  is obvious as $\mathscr{M}_t$ is non-decreasing, $\mathscr{M}_n \ge Y_n,$ $n\ge 1,$ and so, setting $B:=\bigcup_{n\ge 1} (-\infty, b_n] $ (which can be an open or closed half-line, $\pr (Y_j\in B)=1-c<1$), one has
$
\pr (\mathscr{M}_t > b_t {\rm\ ev.}) \ge
 \pr \bigl(  \bigcup_{n\ge 1} \{Y_n\in B^c\}\bigr)
  = 1- \pr \bigl(  \bigcap_{n\ge 1} \{Y_n\in B\}\bigr)=1.
$

\medskip
(iii)~In this case, there exists a sequence $t_n\to\infty $ as $n\to \infty$ such that $t_n g_{t_n}<c<\infty$, $n\ge 1.$ One has
\begin{align*}
\pr (\mathscr{M}_{t_n} \le b_{t_n} {\rm\ i.o.})
 & = \lim_{k\to\infty}
 \pr \bigl( \mbox{$\bigcup_{n\ge k}$ }  \{ \mathscr{M}_{t_n} \le b_{t_n}\} \bigr)
 \\
 & \ge \liminf_{n\ge 1} \pr \bigl( \mathscr{M}_{t_n} \le b_{t_n} \bigr)
  =
  \liminf_{n\ge 1} F^{t_n}(b_{t_n})
  \\
  &
  =   \liminf_{n\ge 1} e^{- t_n  g_{t_n}(1+o(1))}
  \ge  e^{-c} >0.
\end{align*}
Hence $\pr (\mathscr{M}_t > b_t  {\rm\ ev.})<1.$  In view of~(i), that probability must be~0.

\medskip
(iv)~We will make use of the following criterion from~\cite{Kl84, Kl85} derived for an i.i.d.\ sequence $\{Y_n\}$ in the case  a non-decreasing sequence $\{b_n\}$ when $g_n= 1- F(b_n)\to 0,$ $n g_n\to\infty$ as $n\to\infty$:
\begin{equation}
\label{Klass}
\pr (M^Y_n > b_n {\rm\ ev.})=
\left\{
\begin{array}{ll}
0 & \mbox{if}\ \    \Sigma (b) =\infty,\\
1 &  \mbox{if}\ \  \Sigma(b)  <\infty,
\end{array}
\right.
\quad \mbox{where}\quad  \Sigma (b):= \sum_{n\ge 1} g_n e^{-n g_n}.
\end{equation}
As the function $f(x)= x e^{-ax}$ with $a>0$ is decreasing for $x\ge a^{-1}$, $g_n=o(1)$ as $n\to\infty,$ and the function  $g_t$ is non-increasing, we  have,  for $t\in [n,n+1]$ and all sufficiently large~$n$,
\begin{align*}
g_n e^{- n g_n}(1+o(1))& = g_n e^{-(n+1) g_n} \le g_t e^{-(n+1) g_t}
 \le g_t e^{-t g_t}
 \\
 & \le g_t e^{-n g_t} \le g_{n+1} e^{-n g_{n+1}}
  =
 g_{n+1} e^{-(n+1) g_{n+1}} (1+o(1)) .
\end{align*}
Therefore the integral $J(b)$ in~\eqref{J} and the sum  $\Sigma (b)$ in~\eqref{Klass} converge (diverge) simultaneously. It remains to make use of relations~\eqref{MM} and~\eqref{n_n+1}. Theorem~\ref{th2} is proved.
\end{proof}

\begin{proof}[Proof of Corollary~\ref{co2}.] Setting $s_0:=0,$ introduce the
non-decreasing function
\begin{align}
\label{bt}
b_t:= \sum_{n\ge 1} \ell_n \ind (t\in [s_{n}, s_{n+1})), \quad t>0.
\end{align}
In view of the embedding~\eqref{embed} of $\{\Mc_n\}$ in the extremal process $\{\mathscr{M}_t\}$ associated with the d.f.~$F$ and the special form of the boundary $b_t,$ we clearly have  $\pr (\Mc_n >\ell_n {\rm \ ev.})=\pr (\mathscr{M}_t > b_t {\rm \ ev.})$. Therefore part~(i) follows from Theorem~\ref{th2}(i). It remains to verify that the conditions from parts~(ii)--(iv) of the corollary are equivalent to the conditions from the respective parts (ii)--(iv) of Theorem~\ref{th2} in the case of our boundary~\eqref{bt}.

Part~(ii) is obvious. Part~(iii) is also obvious since, for the boundary~\eqref{bt}, one has  $t g_t = s_n q_n$ for $t=s_n$ and $s_n\to \infty$ as $n\to\infty $  by~\Con.  Finally, in part~(iv)
one has
\begin{align*}
J(b) = \sum_{n\ge 1}  \int_{s_{n }}^{s_{n+1}}
 q_n e^{-t q_n} dt
 =\sum_{n\ge 1}  e^{-s_nq_n} (1- e^{-\al_{n+1}q_n}).
\end{align*}
Corollary~\ref{co2} is proved.
\end{proof}

 \begin{proof}[Proof of Theorem~\ref{th3}] We will split the proof of the theorem into three lemmata.

\begin{lemo}
	\label{le0}
	Under conditions of Theorem~\ref{th3}, one can construct $\boldsymbol{X}$ and $\boldsymbol{\Xc}$ so that, for any $m\ge 1,$  on   the event $\{M_m>\ell\}$ one has
\begin{align}
( M_n, I_n) =(\Mc_n, \Ic_n), \quad \mbox{for}\  n\ge m.
\end{align}
\end{lemo}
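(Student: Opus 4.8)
The plan is to construct an explicit coupling, analogous to the one in the proof of Theorem~\ref{th1}, but now exploiting the hypotheses of Theorem~\ref{th3} that the $X_n$ are independent, $\de_n=0$ for all $n$, and the threshold is flat, $\ell_n\equiv\ell$. Since $\de_n=0$, condition~\Conn(iii) gives that the restriction of $F_n$ to $(\ell,\infty)$ coincides exactly with the restriction of $F^{\al_n}$ to $(\ell,\infty)$; there is no longer any need for Dobrushin's maximal coupling or for the random time $T_1$ handling the symmetric differences $A_n\triangle\Ac_n$. Concretely, I would again take i.i.d.\ uniform-$(0,1)$ r.v.'s $U_1,U_2,\ldots$ and set $\Xc_n:=(F^{\al_n})^\leftarrow(U_n)$, $n\ge 1$. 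For $\boldsymbol{X}$ I would define $X_n:=\Xc_n$ whenever $U_n>F^{\al_n}(\ell)$ (i.e.\ on the event $\{\Xc_n>\ell\}$), and define $X_n$ on the complementary event $\{\Xc_n\le\ell\}$ by pushing $U_n$ (rescaled to be uniform on $(0,F^{\al_n}(\ell)]$) through $F_n^\leftarrow$; because the $X_n$ are assumed independent here, this pieced-together r.v.\ has d.f.\ $F_n$, and by~\Conn(ii) (trivially satisfied under independence) the construction is consistent. The key point is that $X_n$ and $\Xc_n$ agree \emph{identically} whenever either exceeds $\ell$.

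Next I would run the deterministic argument of Theorem~\ref{th1} from this cleaner starting point. Fix $m\ge 1$ and work on the event $\{M_m>\ell\}$. First, $M_n>\ell$ for all $n\ge m$ since $M_n$ is non-decreasing, and likewise, once we show $\Mc_m>\ell$, we get $\Mc_n>\ell$ for all $n\ge m$. To see $\Mc_m>\ell$: on $\{M_m>\ell\}$ there is some $k\le m$ with $X_k>\ell$, hence $\Xc_k=X_k>\ell$ by the coupling, so $\Mc_m\ge\Xc_k>\ell$. Now the argument that $I_n=\Ic_n$ and $M_n=\Mc_n$ for all $n\ge m$ is exactly the one at the end of the proof of Theorem~\ref{th1}: if $\Ic_n=1$ for some $n\ge m$ then $\Xc_n=\Mc_n>\ell$, so $X_n=\Xc_n>\ell$; were $I_n=0$, there would be $k$ with $m\le k<n$ (note $k\ge m$ because all earlier maxima are already $>\ell$ and the record-breaking value $X_k$ must itself exceed $X_n>\ell$) and $X_k>X_n>\ell$, forcing $\Xc_k=X_k>\Xc_n$, contradicting $\Ic_n=1$; the reverse implication is symmetric, and then $M_n=X_n=\Xc_n=\Mc_n$ at record times, while between records both maxima stay constant, so equality propagates.

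The main thing to be careful about — really the only subtlety here compared with Theorem~\ref{th1} — is the bookkeeping of which index ranges are ``safe''. In Theorem~\ref{th1} the random time $T_2$ absorbed both the residual coupling failures $\{X_k\ne\Xc_k;B_k\}$ and the events $\{\Mc_k\le\ell_k\}$; here the former are vacuous ($X_k=\Xc_k$ on $\{U_k>F^{\al_k}(\ell)\}$ by construction), and the latter is replaced, on the event $\{M_m>\ell\}$, by the elementary observation above that $\Mc_n>\ell$ for all $n\ge m$. So the analogue of $T_2$ is simply the deterministic time $m$, and no appeal to Borel--Cantelli or to condition~\Conn(iv) is needed inside this lemma — the $\{M_m>\ell\}$ hypothesis does all the work. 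One should also note that part~(i) of~\Conn\ is still what guarantees $X_n\le\ell$ forces nothing pathological, but in fact it is not even invoked here: the coupling is literally an equality on $\{X_n>\ell\}$, so the conclusion is purely combinatorial. I would therefore expect the write-up to be short, with the only genuine care going into stating the construction of $X_n$ on $\{\Xc_n\le\ell\}$ precisely enough that $X_n\sim F_n$ and the $X_n$ remain independent.
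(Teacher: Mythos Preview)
Your approach is correct and is essentially the paper's own coupling: the paper simply sets $X_n:=F_n^\leftarrow(U_n)$ and $\Xc_n:=(F^{\al_n})^\leftarrow(U_n)$ directly (no rescaling or piecing together is needed, since $\de_n=0$ forces $F_n(\ell)=F^{\al_n}(\ell)$ and $F_n\equiv F^{\al_n}$ on $(\ell,\infty)$), immediately obtains $\{X_n>\ell\}=\{\Xc_n>\ell\}$ and $X_n\ind(X_n>\ell)=\Xc_n\ind(\Xc_n>\ell)$, and then finishes with a short set-algebra computation rather than the record-time narrative you borrow from Theorem~\ref{th1}. One small slip in your write-up: the index $k$ with $X_k>X_n$ need not satisfy $k\ge m$ (take $m=2$, $X_1$ large, $X_2,\ldots,X_{n-1}$ all below $\ell$), but this is harmless---all you need is $X_k>X_n>\ell$, whence $\Xc_k=X_k>\Xc_n$ and the contradiction with $\Ic_n=1$ follows regardless of where $k$ sits relative to $m$.
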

	
\begin{proof} Under conditions of Theorem~\ref{th3}, we can construct  $\boldsymbol{X}$ and $\boldsymbol{\Xc}$ by letting $
	X_n:=F_n^{\leftarrow} (U_n),$ 	$\Xc_n:=(F^{\al_n})^{\leftarrow} (U_n),$ $n\ge 1
$, where
$\{U_n\}_{n\ge 1}$ is a sequence of independent uniform-$(0.1)$  r.v.'s.  Then clearly
\begin{align}
\label{XXX}
\{X_n>\ell\}= \{\Xc_n>\ell\},\quad
X_n \ind (X_n>\ell) = \Xc_n \ind (\Xc_n>\ell), \quad n\ge 1.
 \end{align}
Hence, for $n\ge m\ge 1,$
\begin{align*}
\{I_n=1, M_m>\ell\}
 & =\{I_n=1, M_m>\ell, X_n >\ell\}
  \\
  & = \{ X_n >\ell\}\cap \biggl[\bigcup_{k\le m}\{ X_k>\ell\}\biggr] \bigcap_{r<n}
   \bigl[ \{X_r \le \ell\}\cup \{\ell< X_r <X_n\}\bigr]
 \\
 & =  \{ \Xc_n >\ell\}\cap \biggl[\bigcup_{k\le m}\{ \Xc_k>\ell\}\biggr] \bigcap_{r<n}
 \bigl[ \{\Xc_r \le \ell\}\cup \{\ell< \Xc_r < \Xc_n\}\bigr]
 \\
 &=\{\Ic_n=1, \Mc_m>\ell, \Xc_n >\ell\}
 =\{\Ic_n=1, \Mc_m>\ell \}.
 \end{align*}
The claim concerning the equality of  $M_n$ and $\Mc_n$ is next to obvious from~\eqref{XXX}: setting $\tau (m):=\min\{k\ge 1: X_k=M_m\}$ (which is the same as $\min\{k\ge 1: \Xc_k=\Mc_m\}$ on the event $\{M_m>\ell\}=\bigcup_{k\le m} \{X_k>\ell\}=\bigcup_{k\le m} \{\Xc_k>\ell\}=\{\Mc_m>\ell\}$), one has
\begin{align*}
M_n \ind (M_m>\ell)
 & = \bigvee_{k=1}^n X_k \ind (X_{\tau (m)} >\ell)
  = \bigvee_{k=1}^n X_k \ind (X_{k} >\ell) \ind (X_{\tau (m)} >\ell)
  \\
  & = \bigvee_{k=1}^n \Xc_k \ind (\Xc_{k} >\ell) \ind (\Xc_{\tau (m)} >\ell)
  = \Mc_n \ind (\Mc_m>\ell).
 \end{align*}
Lemma~\ref{le0} is proved.\end{proof}

Next observe that one clearly  has
 	\begin{align}
 	b_m & := \pr (I_m=1, M_m\le \ell)\le \pr (M_m\le \ell) = h^{s_m },\quad m\ge 1,
 	\label{b_for_b}\\
 	c_{m,n} & :=\pr (I_m=I_n=1, M_n\le \ell)
 	\le \pr (M_n\le \ell) = h^{s_n },\quad n>m\ge 1.
 	\label{b_for_c}
 	\end{align}

\begin{lemo}
	\label{le1}
	For $ n>m\ge 1,$
\begin{align}
 \pr(I_m =1)&= \frac{\al_m }{s_m }(1-h^{s_m })+b_m
 = \frac{\al_m }{s_m }+\theta_m,\quad |\theta_m|\le  h^{s_m } ,
\label{b_for_I}
\\
 \pr(I_m =I_n=1) & =
   \frac{\al_m \al_n }{s_n -s_m }
 \biggl(\frac{1-h^{ s_m }}{s_m }
 	- \frac{1-h^{ s_n }}{s_n }\biggr)
 	\notag\\
 	&\hskip 2 cm   +  b_m \al_n  \frac{1-h^{ s_n -s_m }}{s_n -s_m }+	 c_{m,n}.
\label{b_for_II}
\end{align}	
\end{lemo}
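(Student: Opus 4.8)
The plan is to compute the two probabilities by decomposing each event according to whether the relevant partial maximum lies above or below the flat threshold $\ell$, and on the ``above'' part to transfer the computation to the $F^\al$-scheme via Lemma~\ref{le0}. For the first identity, I would write $\pr(I_m=1)=\pr(I_m=1,M_m\le\ell)+\pr(I_m=1,M_m>\ell)$. The first summand is exactly $b_m$. For the second summand, Lemma~\ref{le0} (applied with $m=m$) gives $\{I_m=1,M_m>\ell\}=\{\Ic_m=1,\Mc_m>\ell\}$, and since in the $F^\al$-scheme the record indicator $\Ic_m$ is independent of $\Mc_m$ (property~\eqref{I_M}) with $\pr(\Ic_m=1)=\al_m/s_m$ by~\eqref{a/s}, and $\pr(\Mc_m>\ell)=1-F^{s_m}(\ell)=1-h^{s_m}$, the second summand equals $\tfrac{\al_m}{s_m}(1-h^{s_m})$. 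Adding the two gives the first line of~\eqref{b_for_I}; rewriting $\tfrac{\al_m}{s_m}(1-h^{s_m})+b_m=\tfrac{\al_m}{s_m}-\tfrac{\al_m}{s_m}h^{s_m}+b_m$ and noting that the error term $\theta_m:=b_m-\tfrac{\al_m}{s_m}h^{s_m}$ satisfies $|\theta_m|\le h^{s_m}$ (since $0\le b_m\le h^{s_m}$ by~\eqref{b_for_b} and $0\le \tfrac{\al_m}{s_m}\le 1$) yields the second line.

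For the second identity I would decompose $\{I_m=I_n=1\}$ according to the position of $M_n$ relative to $\ell$. On $\{M_n\le\ell\}$ we get the term $c_{m,n}$. On $\{M_n>\ell\}$ I split further according to whether $M_m>\ell$ or $M_m\le\ell$. On $\{I_m=I_n=1,M_m>\ell\}$, Lemma~\ref{le0} converts the whole event to $\{\Ic_m=\Ic_n=1,\Mc_m>\ell\}$, and since in the $F^\al$-scheme $\Ic_m\perp\Ic_n$ and (by~\eqref{I_M}) $\Mc_m$ is independent of $(\Ic_1,\dots,\Ic_m)$, hence of $\Ic_m$, and $\Ic_n$ is a function of $\Xc_{m+1},\dots,\Xc_n$ and $\Mc_m$ — here I must be a little careful: conditionally on $\Mc_m$, $\Ic_n=\ind(\Xc_n>\Mc_m\vee\bigvee_{m<j<n}\Xc_j)$, so $\{\Ic_n=1\}$ is determined by $\Mc_m$ together with $\Xc_{m+1},\dots,\Xc_n$. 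The cleanest route is to condition on $\Mc_m$: given $\Mc_m=y$, $\pr(\Ic_n=1\mid\Mc_m=y)=\tfrac{\al_n}{s_n-s_m}\bigl(1-F^{s_n-s_m}(y)\bigr)$ because, within the block $m{+}1,\dots,n$, having a record above the running block-maximum which also exceeds $y$ occurs with that probability (this is the standard $F^\al$-scheme computation shifted to start from level $y$, or alternatively can be read off from the extremal-process embedding~\eqref{embed}). Then on $\{M_m>\ell\}$ one further splits $\{\Mc_m>\ell\}$ into $\{\Ic_m=1\}\cap\{\Mc_m>\ell\}$ with $\Mc_m$ having the truncated law, giving the main double-sum-free term, and on $\{M_m\le\ell, M_n>\ell\}$, again via Lemma~\ref{le0} (this time the event $\{M_m\le\ell\}=\{\Mc_m\le\ell\}$ and records after time $m$ are records in the $F^\al$-scheme once $M_n>\ell$), one reduces to $b_m$ times $\pr(\text{a record in block }m{+}1,\dots,n\text{ exceeds }\ell)=\al_n\tfrac{1-h^{s_n-s_m}}{s_n-s_m}$.

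Assembling: the $\{M_m>\ell\}$ contribution is $\int_{(\ell,\infty)}\tfrac{\al_n}{s_n-s_m}(1-F^{s_n-s_m}(y))\,\pr(\Ic_m=1,\Mc_m\in dy)$; using $\Ic_m\perp\Mc_m$ and $\pr(\Mc_m\in dy)=dF^{s_m}(y)$ this integral equals $\tfrac{\al_m\al_n}{s_m(s_n-s_m)}\int_{(\ell,\infty)}(1-F^{s_n-s_m}(y))\,dF^{s_m}(y)$, and evaluating $\int_{(\ell,\infty)}dF^{s_m}(y)=1-h^{s_m}$ and $\int_{(\ell,\infty)}F^{s_n-s_m}(y)\,dF^{s_m}(y)=\int_{(\ell,\infty)}F^{s_n-s_m}\,dF^{s_m}$, which after the substitution $u=F(y)$ becomes $\tfrac{s_m}{s_n}(1-h^{s_n})$, produces exactly $\tfrac{\al_m\al_n}{s_n-s_m}\bigl(\tfrac{1-h^{s_m}}{s_m}-\tfrac{1-h^{s_n}}{s_n}\bigr)$. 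The $\{M_m\le\ell, M_n>\ell\}$ contribution is $b_m\,\al_n\tfrac{1-h^{s_n-s_m}}{s_n-s_m}$, and the $\{M_n\le\ell\}$ contribution is $c_{m,n}$; summing the three gives~\eqref{b_for_II}.

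The main obstacle is the middle step: correctly identifying, on the event $\{M_m>\ell\}$, the conditional law of ``there is a record in the block $m{+}1,\dots,n$ that exceeds the level $\Mc_m$''. One has to be careful that Lemma~\ref{le0} only guarantees the coupling $(M_n,I_n)=(\Mc_n,\Ic_n)$ on $\{M_m>\ell\}$, so the reduction to the $F^\al$-scheme is legitimate there; and one has to correctly compute $\pr(\Ic_n=1\mid\Mc_m=y)$, which is most transparently done through the extremal-process embedding~\eqref{embed} (the value $\Xc_j^*=L((s_{j-1},s_j])$ for $m<j\le n$ giving a record exceeding $y$ means the Poisson process puts a point in $(s_m,s_n]\times(y,\infty)$ realizing the maximum in that strip and exceeding $\bigvee_{m<j<n}$, equivalently the running max over $(s_{j-1},s_j]$ first exceeds $y$ at a ``new block record'', which has the stated probability). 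Once this conditional probability is pinned down, the remaining work is the two elementary integrals in $u=F(y)$ and bookkeeping, which I would not spell out in detail.
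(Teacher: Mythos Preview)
Your proposal is correct and follows essentially the same three-part decomposition as the paper: $P_1=\pr(I_m=I_n=1,\,M_m>\ell)$, $P_2=\pr(I_m=I_n=1,\,M_m\le\ell<M_n)$, and $c_{m,n}$. The only notable difference is in how $P_1$ is computed: the paper writes out the joint integral directly over $\{\ell<x_1<x_2\}$ with respect to $dF^{\al_m}(x_1)\,dF^{\al_n}(x_2)$, whereas you condition on $\Mc_m$ and use the formula $\pr(\Ic_n=1\mid\Mc_m=y)=\tfrac{\al_n}{s_n-s_m}(1-F^{s_n-s_m}(y))$ together with $\Ic_m\perp\Mc_m$; these are the same integral organized in two ways. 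For $P_2$, the paper's route is slightly cleaner than yours: it does not invoke the coupling at all but simply uses the independence of the $X_r$'s to factor $\{I_m=1,\,M_m\le\ell\}\cap\{X_r<X_n,\ m<r<n;\ X_n>\ell\}$ (noting that $M_m\le\ell<X_n$ makes the constraint $X_n>M_m$ automatic), and then recognizes the second factor as $\pr(I'_{n-m}=1,\,M'_{n-m}>\ell)$ for the shifted sequence, to which the first part of the lemma applies verbatim. Your appeal to Lemma~\ref{le0} for this piece is unnecessary and slightly awkward, since that lemma's statement only gives information on $\{M_m>\ell\}$; the factorization you need follows directly from independence of the $X_r$'s.
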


\begin{proof}
By Lemma~\ref{le0}, \eqref{a/s} and~\eqref{I_M},
\begin{align}
\pr(I_m =1)&= \pr(I_m =1, M_m >\ell) +b_m
   =\pr(\Ic_m =1, \Mc_m >\ell) +b_m\notag\\
  & =\pr(\Ic_m =1)\pr (\Mc_m >\ell) +b_m
  =\frac{\al_m }{s_m } (1-h^{s_m })   +b_m,
\label{***_2}
\end{align}	
which is the first equality in~\eqref{b_for_I}. The second one is obvious.

To establish~\eqref{b_for_II}, we set $A_{u,v}(x):= \bigcap_{u<r<v}\{\Xc_r<x \}  $ for $v>u\ge 0$  and  observe that, in view of Lemma~\ref{le0} and independence of the $\Xc_r$'s,
\begin{align*}
P_1&:=  \pr(I_m =I_n=1, M_m>\ell) = \pr(\Ic_m =\Ic_n=1, \Mc_m>\ell)\\
 & =  \int_{\ell <x_1 <x_2}
 \pr \bigl(A_{0,m}(x_1)A_{m,n}(x_2)
 \big|\Xc_m=x_1, \Xc_n=x_2\bigr)dF^{\al_m }(x_1)dF^{\al_n}(x_2)\\
 & =  \int_{\ell <x_1 <x_2}
 \biggl(\prod_{u=1}^{m-1}F^{\al_u }(x_1)\biggr)
 \biggl(\prod_{v=m+1}^{n-1}F^{\al_v }(x_2)\biggr)
 dF^{\al_m }(x_1)dF^{\al_n}(x_2)
 \\
  & = \int_{h <y_1 <y_2\le 1}
  y_1^{s(m-1)} y_2^{s_n  - s_m }
   dy_1^{\al_m } dy_2^{\al_n}
   \\
   & = \frac{\al_m \al_n }{s_n -s_m }
   \biggl(\frac{1-h^{s_m }}{s_m }
    -\frac{1-h^{s_n }}{s_n }\biggr).
\end{align*}
Next, by the independence of the $X_r$'s, one has
\begin{align}
P_2&:=  \pr(I_m =I_n=1, M_m\le \ell <M_n)\notag\\
&  = \pr\bigl(\{I_m = 1, M_m\le \ell\}\cap \{ X_r< X_n, m<r<n; X_n>\ell\}\bigr)\notag
\\
&  = \pr\bigl( I_m = 1, M_m\le \ell\bigr)\,
 \pr \bigl( X_r< X_n, m<r<n; X_n>\ell\bigr).
 \label{P2}
 \end{align}
The first factor of the right-hand side is $b_m,$ whereas the second one is of the form $\pr (I'_{n-m}=1, M'_{n-m}>\ell)$ for the shifted sequence $\{X'_j:= X_{j+m}\}_{j \ge 1},$ which satisfies the same assumptions as the original~$\boldsymbol{X}$. Therefore, similarly to the computation of the term $\pr(I_m =1, M_m >\ell)$ in~\eqref{***_2} and using the self-explanatory notations $\Ic'_j  ,$ $\Mc'_j$, we obtain  that the  second factor on the right-hand side of~\eqref{P2} is equal to
\[
\pr(\Ic'_{n-m} =1)\pr (\Mc'_{n-m} >\ell)
 = \frac{\al_n}{s_n -s_m } (1-h^{s_n -s_m }).
\]
We conclude that
$
P_2 =  b_m\al_n(1-h^{s_n -s_m })/(s_n -s_m ).
$
Now~\eqref{b_for_II} immediately follows from the obvious representation $ \pr(I_m =I_n=1) =P_1+P_2+c_{m,n}.$
\end{proof}

\begin{lemo}
	\label{le2}
As $k\to \infty,$
\begin{align}
\frac{s_m s_n }{\al_m \al_n }\pr(I_m =1)\pr(I_n =1)
 \to 1,
\quad \frac{s_m s_n }{\al_m \al_n }\pr(I_m =1,I_n =1) \to 1
\label{XX}
\end{align}	
uniformly in $n>m\ge k.$
\end{lemo}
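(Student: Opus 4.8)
The plan is to substitute the exact expressions from Lemma~\ref{le1} into the two ratios in~\eqref{XX}, perform the algebraic cancellations, and then show that every summand that survives beyond the constant $1$ is dominated by a quantity tending to $0$ uniformly in $n>m\ge k$ because of~\eqref{con_s_al}. Throughout I would keep at hand the elementary facts that $s_j\ge\al_j$ (so $s_j/\al_j\ge1$), that, with $a_j:=\tfrac{s_j^2}{\al_j}(\al_j\vee1)h^{s_j}$, condition~\eqref{con_s_al} says $a_j\to0$ and hence $\sup_{j\ge k}a_j\to0$, that by~\eqref{b_for_b} $\tfrac{s_jb_j}{\al_j}\le\tfrac{s_j^2h^{s_j}}{\al_j}\le a_j$ once $s_j\ge1$, and that $s_j\to\infty$ by~\Con\ so $h^{s_j}\to0$; we may also assume $0<h<1$, since $h=0$ makes $h^{s_j}$, $b_m$ and $c_{m,n}$ vanish and both claims are then immediate, while $h=1$ is excluded by \Conn(iv).

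\textbf{First relation.} By the second equality in~\eqref{b_for_I},
\[
\frac{s_ms_n}{\al_m\al_n}\pr(I_m=1)\pr(I_n=1)=\Bigl(1-h^{s_m}+\tfrac{s_mb_m}{\al_m}\Bigr)\Bigl(1-h^{s_n}+\tfrac{s_nb_n}{\al_n}\Bigr),
\]
and each of the four error terms $h^{s_m},\tfrac{s_mb_m}{\al_m},h^{s_n},\tfrac{s_nb_n}{\al_n}$ is at most $\sup_{j\ge k}(h^{s_j}+a_j)\to0$; this yields the first convergence in~\eqref{XX}.

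\textbf{Second relation.} Inserting~\eqref{b_for_II} and multiplying by $\tfrac{s_ms_n}{\al_m\al_n}$ splits the expression as $R_1+R_2+R_3$ with $R_1:=\tfrac{s_ms_n}{s_n-s_m}\bigl(\tfrac{1-h^{s_m}}{s_m}-\tfrac{1-h^{s_n}}{s_n}\bigr)$, $R_2:=\tfrac{s_ms_nb_m(1-h^{s_n-s_m})}{\al_m(s_n-s_m)}$ and $R_3:=\tfrac{s_ms_n}{\al_m\al_n}c_{m,n}$. A direct simplification gives $R_1=1-\tfrac{s_nh^{s_m}-s_mh^{s_n}}{s_n-s_m}$; writing $s_nh^{s_m}-s_mh^{s_n}=(s_n-s_m)h^{s_m}+s_m(h^{s_m}-h^{s_n})$, $h^{s_m}-h^{s_n}=h^{s_m}(1-h^{s_n-s_m})$ and using $1-h^x\le x|\ln h|$, one gets $0\le\tfrac{s_nh^{s_m}-s_mh^{s_n}}{s_n-s_m}\le h^{s_m}(1+s_m|\ln h|)\le h^{s_m}+|\ln h|\,a_m$, so $R_1\to1$ uniformly. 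For $R_2$ I would distinguish the cases $s_n\ge2s_m$ and $s_n<2s_m$: in the first, $\tfrac{s_n}{s_n-s_m}\le2$ and $1-h^{s_n-s_m}\le1$ give $R_2\le\tfrac{2s_mb_m}{\al_m}\le2a_m$; in the second, $\tfrac{1-h^{s_n-s_m}}{s_n-s_m}\le|\ln h|$ and $s_n<2s_m$ give $R_2\le\tfrac{2|\ln h|\,s_m^2b_m}{\al_m}\le2|\ln h|\,a_m$. Hence $R_2\to0$ uniformly.

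\textbf{Main obstacle.} The delicate term is $R_3$, the contribution of the ``low regime'' probability $c_{m,n}$. Using the crude bound $c_{m,n}\le h^{s_n}$ from~\eqref{b_for_c}, it remains to show that $\sup_{n>m\ge k}\tfrac{s_ms_n}{\al_m\al_n}h^{s_n}\to0$ as $k\to\infty$ under~\eqref{con_s_al}. I would attack this by writing $h^{s_n}=h^{s_m}h^{s_n-s_m}$, estimating $\tfrac{s_mh^{s_m}}{\al_m}\le a_m$, and then controlling the remaining factor $\tfrac{s_n}{\al_n}h^{s_n-s_m}$ by exploiting the monotonicity of $x\mapsto x^2h^x$, which is decreasing on $[s_k,\infty)$ precisely once $s_k>2/|\ln h|$, i.e.\ once $h^{s_k}<e^{-2}$ — this is exactly the threshold flagged in the Remark after Theorem~\ref{th3}. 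I expect this single estimate, and in particular the bookkeeping of exactly how to split the exponent $s_n$ and where to spend the factors $s_m/\al_m$ and $s_n/\al_n$, to be the only genuinely delicate point; combining $R_1\to1$, $R_2\to0$ and $R_3\to0$ then gives the second convergence in~\eqref{XX} and completes Lemma~\ref{le2}.
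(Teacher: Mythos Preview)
Your overall architecture matches the paper exactly: insert Lemma~\ref{le1}, treat the product $\pr(I_m=1)\pr(I_n=1)$ via the factored form of~\eqref{b_for_I}, and split the joint probability into the three pieces $P_1,P_2,c_{m,n}$ (your $R_1,R_2,R_3$). Your treatment of the first relation and of $R_1,R_2$ is correct; where the paper invokes the mean value theorem (for $x\mapsto xh^x$ on $P_1$ and for $x\mapsto h^{1/x}$ on $P_2$), you instead use the elementary inequality $1-h^x\le x|\ln h|$ together with the dichotomy $s_n\gtrless 2s_m$. That is a perfectly good alternative and arguably more transparent; it reaches the same bounds.

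The one genuine gap is $R_3$, which you yourself flag as the delicate point. Your proposed decomposition $h^{s_n}=h^{s_m}h^{s_n-s_m}$ leaves you with the factor $\tfrac{s_n}{\al_n}h^{s_n-s_m}$, and the monotonicity of $x\mapsto x^2h^x$ does not tame it: when $n=m+1$ one has $s_n-s_m=\al_{m+1}$, so $h^{s_n-s_m}$ can be as close to~$1$ as you like while $s_{m+1}/\al_{m+1}$ can be arbitrarily large, and nothing in~\eqref{con_s_al} bounds that ratio by itself. The paper's device is to split the exponent \emph{symmetrically}: since $s_n\ge s_m$, one has $h^{s_n}\le h^{s_m/2}h^{s_n/2}$, and therefore
\[
\frac{s_ms_n}{\al_m\al_n}\,c_{m,n}
\;\le\;\frac{s_ms_n}{\al_m\al_n}\,h^{s_n}
\;\le\;\Bigl(\frac{s_m}{\al_m}h^{s_m/2}\Bigr)\Bigl(\frac{s_n}{\al_n}h^{s_n/2}\Bigr),
\]
after which each factor is argued to vanish via~\eqref{con_s_al}. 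Incidentally, the $h^{s_k}<e^{-2}$ threshold you invoke here is, in the paper, used only in the $P_2$ estimate (to replace $s^2h^s$ by $s_m^2h^{s_m}$); your own case-split argument for $R_2$ never needs it.
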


 	\begin{proof}
The first relation in~\eqref{XX} is obvious from~\eqref{b_for_I} as the left-hand side of the former  relation equals
 \[
 \frac{s_m s_n }{\al_m \al_n }
 \biggl(\frac{\al_m }{s_m  }+\theta_m\biggr)
 \biggl(\frac{\al_n }{s_n  }+\theta_n\biggr)
= \biggl(1+ \theta_m\frac{s_m  }{\al_m } \biggr)
\biggl(1+ \theta_n\frac{s_n  }{\al_n } \biggr)
 \]
and $|\theta_r|\le h^{s_r }=o(\al_r /s_r )$ as $r\to \infty$ by virtue of~\eqref{con_s_al} and~\Con.

To establish the second relation in~\eqref{XX}, we will use~\eqref{b_for_II}. Recall that the first two terms on the latter relation's right-hand side we denoted by $P_i,$ $i=1,2.$ First note that, by the mean value theorem for the function $f(x):=xh^x,$ $h=\rm const$, one has
\begin{align}
\frac{s_m s_n }{\al_m \al_n }P_1
& =
\frac{s_m s_n }{s_n -s_m }
\biggl(\frac{1-h^{s_m }}{s_m }
-\frac{1-h^{s_n }}{s_n }\biggr)
\notag\\
&
= 1- h^{s_m }-h^{s_n } +\frac{s_n h^{s_n }-s_m  h^{s_m } }{s_n -s_m }
\notag\\
&
= 1- h^{s_m }-h^{s_n } +(1 +\ln h^{s})h^s
\label{ssaa1}
\end{align}
for some $s\in [s_m , s_n ].$ Clearly,  the expression in the last line in~\eqref{ssaa1} tends to~1 uniformly in $n>m\ge k\to\infty.$

Next, in view of~\eqref{b_for_b} and again by the mean value theorem (this time for the function $f(x) =h^{1/x},$ $h=\rm const \in (0,1)$), one has
\begin{align}
0 &\le   \frac{s_m s_n }{\al_m \al_n }P_2
=
\frac{b_m s_n s_m \bigl(1-h^{s_n -s_m }\bigr)}{\al_m (s_n -s_m )}
\notag \\
& \le
\frac{h^{ s_m }-h^{s_n }}{\al_m (1/s_m -1/s_n )}
=\frac{|\ln h |}{\al_m } s^2 h^s
\label{ssaa2}
\end{align}
for some $s\in [s_m , s_n ].$  As the function $s^2 h^s$ is decreasing for $s$ such that $h^s<e^{-2},$ the right-hand side of~\eqref{ssaa2} is bounded by $|\ln h |s^2_m h^{s_m }/ \al_m    $ for all sufficiently large~$m$. That expression clearly vanishes uniformly in $m\ge k\to\infty$ in view of condition~\eqref{con_s_al}.

Finally,
\begin{align*}
 \frac{s_m s_n }{\al_m \al_n } c_{m,n}
& \le
\frac{s_m s_n }{\al_m \al_n } h^{s_n }
 \le \frac{s_m  }{\al_m  } h^{s_m /2}
 \frac{ s_n }{ \al_n } h^{s_n /2} \to 0
\end{align*}
uniformly in $n> m\ge k\to\infty$, again due to~\eqref{con_s_al}. Lemma~\ref{le2} is proved.	\end{proof}
 	
It remains to observe that~\eqref{to_1} immediately follows from relations~\eqref{XX}. Theorem~\ref{th3} is proved.  \end{proof}
	
	\section*{Acknowledgments}
A major part of the research presented in this paper was done when K.~Borovkov
was visiting    the School of Informatics and Data Science, Hiroshima University, whose hospitality and support  is gratefully acknowledged. P.~He's work on this research was   supported  by the Maurice H.~Belz scholarship.

\end{document}